\documentclass{amsart}
\usepackage{amscd}
\usepackage{amsmath,empheq}
\usepackage{amsfonts}
\usepackage{amssymb}
\usepackage{mathrsfs}
\usepackage[all]{xy}
\newtheorem{theorem}{Theorem}

\newtheorem{prop}[theorem]{Proposition}
\newtheorem{remark}{Remark}
\newtheorem{corollary}[theorem]{Corollary}
\newtheorem{claim}{Claim}

\newenvironment{proof-sketch}{\noindent{\bf Sketch of Proof}\hspace*{1em}}{\qed\bigskip}

\everymath{\displaystyle}
\newcommand{\RR}{\mathbb R}
\newcommand{\NN}{\mathbb N}

\newcommand{\ZZ}{\mathbb Z}

\renewcommand{\leq}{\leqslant}

\renewcommand{\geq}{\geqslant}
\begin{document}
\title[Asymmetric Robin problems]{Asymmetric Robin problems with indefinite potential and concave terms}
\author[N.S. Papageorgiou]{Nikolaos S. Papageorgiou}
\address[N.S. Papageorgiou]{National Technical University, Department of Mathematics,
				Zografou Campus, Athens 15780, Greece \& Institute of Mathematics, Physics and Mechanics, Jadranska 19, 1000 Ljubljana, Slovenia}
\email{\tt npapg@math.ntua.gr}
\author[V.D. R\u{a}dulescu]{Vicen\c{t}iu D. R\u{a}dulescu}
\address[V.D. R\u{a}dulescu]{ Institute of Mathematics, Physics and Mechanics, Jadranska 19, 1000 Ljubljana, Slovenia \& 
Faculty of Applied Mathematics, AGH University of Science and Technology, 30-059 Krak\'ow, Poland		
		 \& Institute of Mathematics ``Simion Stoilow" of the Romanian Academy, P.O. Box 1-764,  014700 Bucharest, Romania}
\email{\tt vicentiu.radulescu@imfm.si}
\author[D.D. Repov\v{s}]{Du\v{s}an D. Repov\v{s}}
\address[D.D. Repov\v{s}]{Faculty of Education and Faculty of Mathematics and Physics, University of Ljublijiana,  SI-1000 Ljubljana, Slovenia \& Institute of Mathematics, Physics and Mechanics, Jadranska 19, 1000 Ljubljana, Slovenia}
\email{\tt dusan.repovs@guest.arnes.si}
\keywords{Indefinite and unbounded potential, concave term, asymmetric reaction, critical groups, multiple solutions, Harnack inequality.\\
\phantom{aa} 2010 AMS Subject Classification: 35J20, 35J60}
\begin{abstract}
We consider a parametric semilinear Robin problem driven by the Laplacian plus an indefinite and unbounded potential. In the reaction, we have the competing effects of a concave term appearing with a negative sign and of an asymmetric asymptotically linear term which is resonant in the negative direction. Using variational methods together with truncation and perturbation techniques and Morse theory (critical groups) we prove two multiplicity theorems producing four and five respectively nontrivial smooth solutions when the parameter $\lambda>0$ is small.
\end{abstract}
\maketitle

\section{Introduction}

Let $\Omega\subseteq\RR^N$ be a bounded domain with a $C^2$-boundary $\partial\Omega$. In this paper we study the following parametric Robin problem:
\begin{equation}\tag{$P_{\lambda}$}\label{eqP}
	\left\{
		\begin{array}{l}
			-\Delta u(z) + \xi(z)u(z) = f(z,u(z)) - \lambda\,|u(z)|^{q-2}u(z)\ \mbox{in}\ \Omega,\\
			\frac{\partial u}{\partial n} + \beta(z)u(z)=0\ \mbox{on}\ \partial\Omega.
		\end{array}
	\right\}
\end{equation}

In this problem, the potential function $\xi\in L^s(\Omega)$ ($s>N$) is indefinite (that is, sign changing). In the reaction (right-hand side), the function $f(z,x)$ is Carath\'eodory (that is, for all $x\in\RR$ the function $z\mapsto f(z,x)$ is measurable and for almost all $z\in\Omega$ the function $x\mapsto f(z,x)$ is continuous) and $f(z,\cdot)$ has linear growth near $\pm\infty$. However, the asymptotic behaviour of $f(z,\cdot)$ as $x\rightarrow\pm\infty$ is asymmetric. More precisely, we assume that the quotient $\frac{f(z,x)}{x}$ as $x\rightarrow+\infty$ stays the principal eigenvalue $\hat\lambda_1$ of the differential operator $u\mapsto -\Delta u+\xi(z)u$ with Robin boundary condition, while as $x\rightarrow-\infty$ the quotient $\frac{f(z,x)}{x}$ stays below $\hat\lambda_1$ with possible interaction (resonance) with respect to $\hat\lambda_1$ from the left. So, $f(z,\cdot)$ is a crossing (jumping) nonlinearity. In the term $-\lambda|u|^{q-2}u$, $ \lambda>0$ is a parameter and $1<q<2$. Hence this term is a concave nonlinearity. Therefore in the reaction we have the competing effects of resonant and concave terms. However, note that in our problem the concave nonlinearity enters with a negative sign. Such problems were considered by Perera \cite{12}, de Paiva \& Massa \cite{6}, de Paiva \& Presoto \cite{7} for Dirichlet problems with zero potential (that is, $\xi\equiv0$). Of the aforementioned works, only de Paiva \& Presoto \cite{7} have an asymmetric reaction of special form, which is superlinear in the positive direction and linear and nonresonant in the negative direction. Recently, problems with asymmetric reaction were studied by D'Agui, Marano \& Papageorgiou \cite{2} (Robin problems), Papageorgiou \& R\u{a}dulescu \cite{8, 11} (Neumann and Robin problems) and Recova \& Rumbos \cite{14} (Dirichlet problems).

We prove two multiplicity results in which we show that for all small $\lambda>0$ the problem has four and five nontrivial smooth solutions, respectively. Our approach uses variational tools based in the critical point theory, together with suitable truncation, perturbation and comparison techniques and Morse theory (critical groups).

\section{Mathematical background and hypotheses}

Let $X$ be a Banach space. We denote by $X^*$  the topological dual of $X$ and by $\langle\cdot,\cdot\rangle$ the duality brackets for the pair $(X^*,X)$. Given $\varphi\in C^1(X,\RR)$, we say that $\varphi$ satisfies the ``Cerami condition" (the ``C-condition" for short) if the following property holds:
$$
\begin{array}{c}
	``\mbox{Every sequence}\ \{u_n\}_{n\geq1}\subseteq X\ \mbox{such that}\\
	\{\varphi(u_n)\}_{n\geq1}\subseteq\RR\ \mbox{is bounded and}\ 
	(1+||u_n||)\varphi'(u_n)\rightarrow0\ \mbox{in}\ X^*\ \mbox{as}\ n\rightarrow\infty,\\
	\mbox{admits a strongly convergent subsequence}."
\end{array}
$$

This compactness-type condition on $\varphi(\cdot)$, is crucial in deriving the minimax theory of the critical values of $\varphi$. One of the main results in that theory is the so-called ``mountain pass theorem", which we recall below.

\begin{theorem}\label{th1}
	Assume that $\varphi\in C^1(X,\RR)$ satisfies the $C$-condition, $u_0,u_1\in X$, $||u_1-u_0||>r$,
	$$
	\max\{\varphi(u_0),\varphi(u_1)\} < \inf[\varphi(u): ||u-u_0||=r]=m_r
	$$
	and $c=\inf_{\gamma\in\Gamma}\max_{0\leq t\leq1}\varphi(\gamma(t))$ with $\Gamma=\{\gamma\in C([0,1],X):\gamma(0)=u_0, \gamma(1)=u_1\}$. Then $c\geq m_r$ and $c$ is a critical value of $\varphi$ (that is, there exists $u\in X$ such that $\varphi(u)=c, \varphi'(u)=0$).
\end{theorem}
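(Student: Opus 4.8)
The plan is to argue by contradiction, combining a deformation argument with the compactness supplied by the $C$-condition. First I would establish the easy inequality $c\geq m_r$. Given any $\gamma\in\Gamma$, the function $t\mapsto\|\gamma(t)-u_0\|$ is continuous on $[0,1]$, equals $0$ at $t=0$ and equals $\|u_1-u_0\|>r$ at $t=1$, so by the intermediate value theorem there is $t_*\in(0,1)$ with $\|\gamma(t_*)-u_0\|=r$, whence $\varphi(\gamma(t_*))\geq m_r$ and therefore $\max_{0\leq t\leq1}\varphi(\gamma(t))\geq m_r$. Taking the infimum over $\gamma\in\Gamma$ gives $c\geq m_r$; since in addition $\max\{\varphi(u_0),\varphi(u_1)\}<m_r\leq c$, the endpoints lie strictly below the level $c$.

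Next I would show that $c$ is a critical value. Suppose not: then $K_c=\{u\in X:\varphi(u)=c,\ \varphi'(u)=0\}=\emptyset$. The heart of the argument is a quantitative deformation lemma adapted to the $C$-condition: there exist $\varepsilon\in\left(0,\tfrac12\left(c-\max\{\varphi(u_0),\varphi(u_1)\}\right)\right)$ and a continuous map $\eta:[0,1]\times X\to X$ such that (i) $\eta(0,\cdot)=\mathrm{id}_X$; (ii) $\eta(t,u)=u$ whenever $\varphi(u)\notin[c-2\varepsilon,c+2\varepsilon]$; and (iii) $\eta(1,\varphi^{c+\varepsilon})\subseteq\varphi^{c-\varepsilon}$, where $\varphi^a=\{u\in X:\varphi(u)\leq a\}$. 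The construction is standard: using $K_c=\emptyset$ together with the $C$-condition, one first checks that $\|\varphi'(u)\|_*$ is bounded below by a positive quantity of order $(1+\|u\|)^{-1}$ on the strip $\{c-2\varepsilon\leq\varphi\leq c+2\varepsilon\}$ for $\varepsilon$ small (otherwise a sequence violating the $C$-condition would be produced); one then builds a locally Lipschitz pseudogradient vector field on the set of regular points, rescales it by the weight $(1+\|u\|)$ to obtain a locally Lipschitz, forward-complete field, cuts it off smoothly outside the strip, and integrates the corresponding autonomous ODE to define $\eta$. The weight $(1+\|u\|)$ is precisely what compensates for the fact that under the $C$-condition the gradient need only decay like $1/\|u\|$ along unbounded sequences, so that the flow still achieves a uniform decrease of $\varphi$ as it crosses the strip around level $c$.

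Finally I would derive the contradiction. By the definition of $c$ as an infimum, pick $\gamma\in\Gamma$ with $\max_{0\leq t\leq1}\varphi(\gamma(t))\leq c+\varepsilon$, that is, $\gamma([0,1])\subseteq\varphi^{c+\varepsilon}$, and set $\hat\gamma(t)=\eta(1,\gamma(t))$. Then $\hat\gamma\in C([0,1],X)$, and since $\varphi(u_0),\varphi(u_1)<c-2\varepsilon$ lie outside $[c-2\varepsilon,c+2\varepsilon]$, property (ii) gives $\hat\gamma(0)=u_0$ and $\hat\gamma(1)=u_1$, so $\hat\gamma\in\Gamma$. But property (iii) yields $\hat\gamma([0,1])\subseteq\varphi^{c-\varepsilon}$, hence $\max_{0\leq t\leq1}\varphi(\hat\gamma(t))\leq c-\varepsilon<c$, contradicting the definition of $c$. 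Therefore $K_c\neq\emptyset$, i.e.\ $c$ is a critical value of $\varphi$. The main technical obstacle is the deformation lemma under the Cerami (rather than Palais--Smale) condition — specifically, verifying that the $(1+\|u\|)$-rescaled pseudogradient flow exists for all positive time and produces the required uniform descent across the strip around level $c$; everything else is elementary topology and the choice of $\varepsilon$.
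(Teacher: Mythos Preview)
Your argument is the standard deformation proof of the mountain pass theorem under the Cerami condition, and it is correct: the intermediate value argument gives $c\geq m_r$, and the quantitative deformation lemma (with the $(1+\|u\|)$-weighted pseudogradient flow to handle the C-condition rather than the Palais--Smale condition) yields the contradiction if $K^c_\varphi=\emptyset$. One small wording point: in property~(ii) you want $\varphi(u_0),\varphi(u_1)<c-2\varepsilon$ so that the endpoints are fixed, and your choice of $\varepsilon<\tfrac12(c-\max\{\varphi(u_0),\varphi(u_1)\})$ ensures exactly this.

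However, there is nothing to compare against: the paper does not prove Theorem~\ref{th1}. It is stated in Section~2 as background (``One of the main results in that theory is the so-called `mountain pass theorem', which we recall below'') and is used later without proof, as is customary for this classical result. So your proposal supplies a proof where the paper simply quotes the theorem.
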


Recall that a Banach space $X$ has the ``Kadec-Klee property", if the following holds:
$$
``u_n\xrightarrow{w}u\ \mbox{in}\ X,\ ||u_n||\rightarrow||u||\Rightarrow u_n\rightarrow u\ \mbox{in}\ X."
$$

It is an easy consequence of the parallelogram law, that every Hilbert space has the Kadec-Klee property (see Gasinski \& Papageorgiou \cite{3}).

In the study of problem (\ref{eqP}), we will use the following three spaces:
$$
H^1(\Omega),\ C^1(\overline{\Omega}),\ L^r(\partial\Omega)\ (1\leq r\leq\infty).
$$

The Sobolev space $H^1(\Omega)$ is a Hilbert space with inner product given by
$$
(u,h) = \int_\Omega(Du, Dh)_{\RR^N}dz + \int_\Omega uhdz\ \mbox{for all}\ u,h\in H^1(\Omega).
$$

We denote by $||\cdot||$ the corresponding norm on $H^1(\Omega)$. So, we have
$$
||u|| = \left[||u||^2_2 + ||Du||^2_2\right]^{1/2}\ \mbox{for all}\ u\in H^1(\Omega).
$$

The space $C^1(\overline\Omega)$ is an ordered Banach space with positive (order) cone
$$
C_+ = \{u\in C^1(\overline\Omega):u(z)\geq0\ \mbox{for all}\ z\in\overline\Omega\}.
$$

This cone has a nonempty interior. Note that
$$
D_+ = \{u\in C_+:u(z)>0\ \mbox{for all}\ z\in\overline\Omega\}\subseteq{\rm int}\, C_+.
$$

In fact, $D_+$ is the interior of $C_+$ when the latter is furnished with the relative $C(\overline\Omega)$-norm topology.

On $\partial\Omega$ we consider the $(N-1)$-dimensional Hausdorff (surface) measure $\sigma(\cdot)$. Using this measure on $\partial\Omega$, we can define in the usual way the ``boundary" Lebesgue spaces $L^r(\partial\Omega)$ (for $1\leq r\leq\infty$). From the theory of Sobolev spaces, we know that there exists a unique continuous linear map $\gamma_0:H^1 (\Omega)\rightarrow L^2(\partial\Omega)$ known as the ``trace map" such that
$$
\gamma_0(u)=u|_{\partial\Omega}\ \mbox{for all}\ u\in H^1(\Omega)\cap C^1(\overline\Omega).
$$

So, the trace map assigns ``boundary values" to every Sobolev function. The trace map is compact into $L^p(\partial\Omega)$ for all $1\leq p<\frac{2(N-1)}{N-2}$ if $N\geq3$ and into $L^p(\partial\Omega)$ for all $1\leq p\leq\infty$ if $N=1,2$. Also we have
$$
{\rm im}\,\gamma_0 = H^{\frac{1}{2},2}(\partial\Omega)\ \mbox{and}\ {\rm ker}\,\gamma_0=H^1_0(\Omega).
$$

In what follows, for the sake of notational simplicity, we drop the use of the trace map $\gamma_0$. All restrictions of Sobolev functions on $\partial\Omega$ are understood in the sense of traces.

Next, we consider the following linear eigenvalue problem:
\begin{equation}\label{eq1}
	\left\{
		\begin{array}{l}
			-\Delta u(z) + \xi(z)u(z) = \hat\lambda u(z)\ \mbox{in}\ \Omega,\\
			\frac{\partial u}{\partial n} + \beta(z)(u)=0\ \mbox{on}\ \partial\Omega.
		\end{array}
	\right\}
\end{equation}

This problem was studied by D'Agui, Marano \& Papageorgiou \cite{2}. We impose the following conditions on the potential function $\xi(\cdot)$ and on the boundary coefficient $\beta(\cdot)$.

\smallskip
$H(\xi):$ $\xi\in L^s(\Omega)$ with $s>N$.

\begin{remark}
	The potential function is both unbounded and sign-changing.
\end{remark}

$H(\beta):$ $\beta\in W^{1,\infty}(\partial\Omega)$ and $\beta(z)\geq0$ for all $z\in\partial\Omega$.

\begin{remark}
	If $\beta\equiv0$, then we recover the Neumann problem.
\end{remark}

Let $\gamma: H^1(\Omega)\rightarrow\RR$ be the $C^2$ functional defined by
$$
\gamma(u)=||Du||^2_2 + \int_\Omega\xi(z)u^2dz + \int_{\partial\Omega}\beta(z)u^2d\sigma\ \mbox{for all}\ u\in H^1(\Omega).
$$

Problem (\ref{eq1}) admits a smallest eigenvalue $\hat{\lambda}_1\in\RR$ given by
\begin{equation}\label{eq2}
	\hat{\lambda_1} = \inf\left\{\frac{\gamma(u)}{||u||^2_2}: u\in H^1(\Omega), \ u\neq0\right\}.
\end{equation}

Moreover, there exists $\mu>0$ such that
\begin{equation}\label{eq3}
	\gamma(u) + \mu||u||^2_2 \geq c_0||u||^2\ \mbox{for some}\ c_0>0,\ \mbox{for all}\ u\in H^1(\Omega).
\end{equation}

Using (\ref{eq3}) and the special theorem for compact self-adjoint operators on Hilbert spaces, we produce the full spectrum of (\ref{eq2}). This consists of a sequence $\{\hat{\lambda}_k\}_{k\in\NN}$ of distinct eigenvalues such that $\hat\lambda_k\rightarrow+\infty$. Let $E(\hat\lambda_k)$ denote the eigenspace corresponding to the eigenvalue $\hat\lambda_k$. From the regularity theory of Wang \cite{15}, we have
$$
E(\hat\lambda_k)\subseteq C^1(\overline\Omega)\ \mbox{for all}\ k\in\NN.
$$

Each eigenspace has the ``Unique Continuation Property" (UCP for short). This means that if $u\in E(\hat\lambda_k)$ vanishes on a set of positive Lebesgue measure, then $u\equiv0$.

Let $\overline{H}_m = \bigoplus^m_{k=1} E(\hat\lambda_k)$ and $\hat{H}_m = \overline{H}^\perp_m = \overline{\bigoplus_{k\geq m+1}E(\hat\lambda_k)}$. We have
$$
H^1(\Omega) = \overline{H}_m \oplus \hat{H}_m.
$$

Moreover, for every $m\geq2$, we  have variational characterizations for the eigenvalues for the eigenvalues $\hat\lambda_m$ analogus to that for $\hat\lambda_1$ (see (\ref{eq2})):
\begin{eqnarray}
	\hat\lambda_m = & \inf\left\{\frac{\gamma(u)}{||u||^2_2}: u\in\hat{H}_{m-1}, u\neq0\right\} \nonumber \\
	= & \sup \left\{\frac{\gamma(u)}{||u||^2_2}: u\in\overline{H}_m, u\neq0\right\}, m\geq2. \label{eq4}
\end{eqnarray}

In (\ref{eq2}) the infimum is realized on $E(\hat\lambda_1)$, while in (\ref{eq4}) both the infimum and the supremum are realized on $E(\hat\lambda_m)$. We know that $\dim E(\hat\lambda_1)=1$ (that is, the first eigenvalue $\hat\lambda_1$ is simple). Hence the elements of $E(\hat\lambda_1)$ have constant sign. We denote by $\hat{u}_1\in C_+\backslash\{0\}$  the positive $L^2$-normalized eigenfunction (that is, $||\hat{u}_1||_2=1$) corresponding to $\hat\lambda_1$. From the strong maximum principle we have $\hat{u}_1(z)>0$ for all $z\in\Omega$ and if $\xi^+\in L^\infty(\Omega)$ (that is, the potential function is bounded above), then by the Hopf boundary point theorem we have $\hat{u}_1\in D_+$ (see Pucci \& Serrin \cite[p. 120]{13}).

Using (\ref{eq2}), (\ref{eq4}) and the above properties, we have the following useful inequalities.

\begin{prop}\label{prop2}
	\begin{itemize}
		\item [(a)] If $\vartheta\in L^\infty(\Omega),\ \vartheta(z)\leq\hat\lambda_m$ for almost all $z\in\Omega$, $\vartheta\not\equiv\hat\lambda_m$ $(m\in\NN)$, then there exists $c_1>0$ such that
			$$
			c_1||u||^2\leq\gamma(u) - \int_\Omega\vartheta(z)u^2dz\ \mbox{for all}\ u\in\hat{H}_{m-1}.
			$$
		\item [(b)] If $\vartheta\in L^\infty(\Omega),\ \vartheta(z)\geq\hat\lambda_m$ for almost all $z\in\Omega$, $\vartheta\not\equiv\hat\lambda_m$ $(m\in\NN)$, then there exists $c_2>0$ such that
			$$
			\gamma(u) - \int_\Omega\vartheta(z)u^2dz\leq c_2||u||^2\ \mbox{for all}\ u\in\overline{H}_m.
			$$
	\end{itemize}
\end{prop}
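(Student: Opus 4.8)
The plan is to prove (a) by a compactness-and-contradiction argument resting on the variational characterization (\ref{eq2})--(\ref{eq4}) of $\hat\lambda_m$, on the compactness of the Sobolev embedding and of the trace map, and on the Unique Continuation Property (UCP) of the eigenspaces; part (b) will then reduce to an elementary compactness argument on the finite-dimensional space $\overline H_m$. (I will prove (b) in the sharper form $\gamma(u)-\int_\Omega\vartheta u^2\,dz\le-c_2\|u\|^2$ for $u\in\overline H_m$, which is the natural counterpart of (a) and a fortiori gives the stated inequality.)

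\smallskip
For (a), I would first record the one-sided bound coming from (\ref{eq2})--(\ref{eq4}) and $\vartheta\le\hat\lambda_m$: for all $u\in\hat H_{m-1}$, $\gamma(u)-\int_\Omega\vartheta u^2\,dz\ge\hat\lambda_m\|u\|_2^2-\int_\Omega\vartheta u^2\,dz=\int_\Omega(\hat\lambda_m-\vartheta)u^2\,dz\ge0$. Assuming no admissible $c_1>0$ exists, I would pick $u_n\in\hat H_{m-1}$ with $\|u_n\|=1$ and $\gamma(u_n)-\int_\Omega\vartheta u_n^2\,dz\to0$, and pass to a subsequence with $u_n\xrightarrow{w}u$ in $H^1(\Omega)$. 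The next step is the passage to the limit in the lower-order terms: by compactness of the embedding and of the trace, $u_n\to u$ in $L^2(\Omega)$ and in $L^2(\partial\Omega)$, so $\int_{\partial\Omega}\beta u_n^2\,d\sigma\to\int_{\partial\Omega}\beta u^2\,d\sigma$ and $\int_\Omega\vartheta u_n^2\,dz\to\int_\Omega\vartheta u^2\,dz$; and since $\xi\in L^s(\Omega)$ with $s>N$ the embedding $H^1(\Omega)\hookrightarrow L^{2s'}(\Omega)$ ($s'=\tfrac{s}{s-1}$) is compact, so H\"older's inequality yields $\int_\Omega\xi u_n^2\,dz\to\int_\Omega\xi u^2\,dz$. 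From $\|Du_n\|_2^2=\bigl[\gamma(u_n)-\int_\Omega\vartheta u_n^2\,dz\bigr]+\int_\Omega\vartheta u_n^2\,dz-\int_\Omega\xi u_n^2\,dz-\int_{\partial\Omega}\beta u_n^2\,d\sigma$ I would deduce that $\|Du_n\|_2$ converges; were $u=0$, its limit would be $0$, forcing $\|u_n\|\to0$, a contradiction, so $u\ne0$, and $u\in\hat H_{m-1}$ by weak closedness. Weak lower semicontinuity of $\|D\cdot\|_2^2$ then gives $\gamma(u)-\int_\Omega\vartheta u^2\,dz\le\liminf_n\bigl[\gamma(u_n)-\int_\Omega\vartheta u_n^2\,dz\bigr]=0$, hence equality by the one-sided bound. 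Finally, $0=\gamma(u)-\int_\Omega\vartheta u^2\,dz\ge\int_\Omega(\hat\lambda_m-\vartheta)u^2\,dz\ge0$ forces $\gamma(u)=\hat\lambda_m\|u\|_2^2$ and $\int_\Omega(\hat\lambda_m-\vartheta)u^2\,dz=0$: the first says $u\ne0$ realizes the infimum characterizing $\hat\lambda_m$, so $u\in E(\hat\lambda_m)$ (it solves (\ref{eq1}) weakly with $\hat\lambda=\hat\lambda_m$); the second forces $u\equiv0$ on $\{\vartheta<\hat\lambda_m\}$, a set of positive measure since $\vartheta\not\equiv\hat\lambda_m$. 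This contradicts the UCP of $E(\hat\lambda_m)$, so $c_1>0$ exists.

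\smallskip
For (b), I would use that $\overline H_m=\bigoplus_{k=1}^m E(\hat\lambda_k)$ is finite-dimensional. By (\ref{eq2})--(\ref{eq4}) and $\vartheta\ge\hat\lambda_m$ one has $\gamma(u)-\int_\Omega\vartheta u^2\,dz\le\int_\Omega(\hat\lambda_m-\vartheta)u^2\,dz\le0$ on $\overline H_m$, and on the compact set $\{u\in\overline H_m:\|u\|=1\}$ the continuous functional $u\mapsto\gamma(u)-\int_\Omega\vartheta u^2\,dz$ attains a maximum $-c_2\le0$; if $c_2=0$, the maximizer would, exactly as in (a), be a nonzero element of $E(\hat\lambda_m)$ vanishing on the positive-measure set $\{\vartheta>\hat\lambda_m\}$, against the UCP, so $c_2>0$ and $2$-homogeneity finishes the argument.

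\smallskip
I expect the main obstacle to be the limiting step in part (a): showing that the weak limit $u$ is nonzero and that $\gamma(u)-\int_\Omega\vartheta u^2\,dz=0$. This is exactly where the compact embeddings and the hypothesis $\xi\in L^s(\Omega)$ with $s>N$ are needed; once $u\ne0$ with $\gamma(u)=\hat\lambda_m\|u\|_2^2$ is established, the contradiction with the UCP is automatic.
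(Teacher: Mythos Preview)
Your argument is correct. The paper does not actually supply a proof of Proposition~\ref{prop2}; it merely asserts that the inequalities follow from the variational characterizations (\ref{eq2}), (\ref{eq4}) and the Unique Continuation Property of the eigenspaces. Your compactness-and-contradiction proof for (a) and the finite-dimensional compactness argument for (b) are precisely the standard way to make those hints rigorous, so there is nothing to compare against beyond noting that you have filled in exactly the details the authors omit.

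Two minor remarks. First, your observation that the stated form of (b) should really read $\gamma(u)-\int_\Omega\vartheta u^2\,dz\le -c_2\|u\|^2$ is well taken: the version printed in the paper is trivially true for any $c_2>0$, while the use of Proposition~\ref{prop2}(b) later in the proof of Theorem~\ref{th12} (the line ``$\le\frac12[-c_2+\epsilon]\|u\|^2+\ldots$'') confirms that the authors intended the sharper inequality you prove. Second, the step ``$\gamma(u)=\hat\lambda_m\|u\|_2^2$ implies $u\in E(\hat\lambda_m)$'' deserves one sentence of justification via the spectral decomposition $u=\sum_{k\ge m}u_k$ with $u_k\in E(\hat\lambda_k)$ (or the analogous $u=\sum_{k\le m}u_k$ in part (b)), from which $\gamma(u)-\hat\lambda_m\|u\|_2^2=\sum_k(\hat\lambda_k-\hat\lambda_m)\|u_k\|_2^2$ forces all components with $k\ne m$ to vanish; but this is routine and does not affect the validity of your argument.
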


Note that if $\xi\equiv0,\ \beta\equiv0$, then $\hat{\lambda}_1=0$, while if $\xi\geq0$ and either $\xi\equiv0$ or $\beta\equiv0$, then $\hat\lambda_1>0$. Also, the elements of $E(\hat\lambda_k)$ for $k\geq2$ are nodal (that is, sign-changing).

In addition to the eigenvalue problem (\ref{eq1}), we can consider a weighted version of it. So, let $m\in L^\infty(\Omega),\ m(z)\geq0$ for almost all $z\in\Omega,\ m\not\equiv0$ and consider the following linear eigenvalue problem
\begin{equation}\label{eq5}
	\left\{
		\begin{array}{ll}
			-\Delta u(z) + \xi(z)u(z) = \tilde\lambda m(z)u(z)\ \mbox{in}\ \Omega,\\
			\frac{\partial u}{\partial n} + \beta(z)u=0\ \mbox{on}\ \partial\Omega.
		\end{array}
	\right\}
\end{equation}

This eigenvalue problem exhibits the same properties as (\ref{eq1}). So, the spectrum consists of a sequence $\{\tilde\lambda_k(m)\}_{k\in\NN}$ of distinct eigenvalues such that $\tilde\lambda_k(m)\rightarrow+\infty$ as $k\rightarrow+\infty$. As for (\ref{eq1}), the first eigenvalue $\tilde\lambda_1(m)$ is simple and the elements of $E(\tilde\lambda_1(m))\subseteq C^1(\overline\Omega)$ have fixed sign, while the elements of $E(\tilde\lambda_k(m))\subseteq C^1(\overline\Omega)$ (for all $k\geq2$) are nodal. We have variational characterisations for all the eigenvalues as in (\ref{eq2}) and (\ref{eq4}) only now the Rayleigh quotient is $\frac{\gamma(u)}{\int_\Omega m(z)u^2dz}$. Moreover, the eigenspaces have the UCP property. These properties lead to the following monotonicity property for the map $m\mapsto\tilde\lambda_k(m)$, $k\in\NN$.

\begin{prop}\label{prop3}
	If $m_1,\,m_2\in L^\infty(\Omega)$, $0\leq m_1(z)\leq m_2(z)$ for almost all $z\in\Omega$, $m_1\not\equiv0$, $m_2\not\equiv m_1$, then $\tilde\lambda_k(m_2)<\tilde\lambda_k(m_1)$ for all $k\in\NN$.
\end{prop}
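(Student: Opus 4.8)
The plan is to obtain the non-strict inequality $\tilde\lambda_k(m_2)\le\tilde\lambda_k(m_1)$ from the variational characterisation of the weighted eigenvalues, and then to sharpen it to the claimed strict inequality by means of the Unique Continuation Property (UCP) of the eigenspaces.

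First I would record the Courant--Fischer characterisation of $\tilde\lambda_k(m)$, namely the analogue for the Rayleigh quotient $u\mapsto\gamma(u)/\int_\Omega m\,u^2\,dz$ of the formulas (\ref{eq2}) and (\ref{eq4}),
\[
\tilde\lambda_k(m)=\inf\Big\{\,\sup_{u\in V\setminus\{0\}}\frac{\gamma(u)}{\int_\Omega m\,u^2\,dz}\ :\ V\subseteq H^1(\Omega),\ \dim V=k\,\Big\},
\]
with the outer infimum attained at $V=\widetilde H_k(m):=\bigoplus_{j=1}^kE(\tilde\lambda_j(m))$, so that $\sup_{u\in\widetilde H_k(m)\setminus\{0\}}\gamma(u)/\int_\Omega m\,u^2\,dz=\tilde\lambda_k(m)$. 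Since $0\le m_1\le m_2$ gives $\int_\Omega m_1 u^2\,dz\le\int_\Omega m_2 u^2\,dz$, on $\widetilde H_k(m_1)\setminus\{0\}$ — where $\gamma>0$ and $\int_\Omega m_1 u^2\,dz>0$ — the quotient built from $m_1$ dominates pointwise the one built from $m_2$. Feeding the $k$-dimensional subspace $\widetilde H_k(m_1)$ into the characterisation of $\tilde\lambda_k(m_2)$ then yields $\tilde\lambda_k(m_2)\le\sup_{\widetilde H_k(m_1)\setminus\{0\}}\gamma(u)/\int_\Omega m_2 u^2\,dz\le\tilde\lambda_k(m_1)$.

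For strictness I would argue by contradiction. Assume $\tilde\lambda_k(m_1)=\tilde\lambda_k(m_2)=:\tilde\lambda$ for some $k$. Then the chain of inequalities above collapses, so $\sup_{u\in\widetilde H_k(m_1)\setminus\{0\}}\gamma(u)/\int_\Omega m_2 u^2\,dz=\tilde\lambda$; let $w\in\widetilde H_k(m_1)\setminus\{0\}$ realise this supremum (attained by compactness of the unit sphere of the finite-dimensional space $\widetilde H_k(m_1)$, on which $\int_\Omega m_2 u^2\,dz\ge\int_\Omega m_1 u^2\,dz>0$). Comparing the two Rayleigh quotients at $w$ forces $\gamma(w)/\int_\Omega m_1 w^2\,dz=\tilde\lambda$ as well, hence both (i) $w$ maximises the $m_1$-quotient over $\widetilde H_k(m_1)$, and (ii) $\int_\Omega(m_2-m_1)\,w^2\,dz=0$. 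Because the eigenvalues $\tilde\lambda_1(m_1)<\dots<\tilde\lambda_k(m_1)$ are distinct, (i) implies $w\in E(\tilde\lambda_k(m_1))$, a genuine eigenfunction; and since $m_2-m_1\ge0$, $m_2\not\equiv m_1$, the set $\{z\in\Omega:m_2(z)>m_1(z)\}$ has positive Lebesgue measure, so (ii) says that $w$ vanishes on it. The UCP of $E(\tilde\lambda_k(m_1))$ now gives $w\equiv0$, a contradiction, and therefore $\tilde\lambda_k(m_2)<\tilde\lambda_k(m_1)$ for every $k\in\NN$.

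The step I expect to be the main obstacle is the extraction in (i)--(ii) of a \emph{bona fide} eigenfunction carrying the equality: a generic element of $\widetilde H_k(m_1)$ is only a superposition of eigenfunctions belonging to different eigenspaces and need not satisfy the UCP, so one cannot simply plug an eigenfunction of the $m_2$-problem into the $m_1$-characterisation. Choosing $w$ to be the maximiser over $\widetilde H_k(m_1)$ of the quotient associated with the \emph{larger} weight $m_2$ is precisely the device that makes $w$ land in the top eigenspace $E(\tilde\lambda_k(m_1))$ (thanks to the distinctness of the $\tilde\lambda_j(m_1)$) while simultaneously equalising the two quotients. A routine preliminary to be settled first is that $\gamma>0$ and $\int_\Omega m\,u^2\,dz>0$ on $\widetilde H_k(m)\setminus\{0\}$, which is what makes the pointwise comparison of Rayleigh quotients run in the stated direction.
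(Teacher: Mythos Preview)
The paper does not actually supply a proof of this proposition; it merely records that the variational characterisations of the weighted eigenvalues together with the Unique Continuation Property of the eigenspaces ``lead to'' the stated monotonicity. Your argument uses precisely these two ingredients---Courant--Fischer for the non-strict inequality and UCP (applied to a maximiser lying in the top eigenspace $E(\tilde\lambda_k(m_1))$) for the strict one---so it is exactly the proof the paper has in mind, and the positivity preliminary you flag is indeed what is needed (and holds in the paper's only application, see (\ref{eq20}), where $\tilde\lambda_1(\hat\lambda_1)=1>0$).
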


Let $f_0:\Omega\times\RR\rightarrow\RR$ be a Carath\'eodory function such that
$$
|f_0(z,x)\leq a_0(z)[1+|x|^{r-1}]|\ \mbox{for almost all}\ x\in\RR,
$$
with $a_0\in L^\infty(\Omega)$ and $1<r\leq2^*=\left\{\begin{array}{ll}\frac{2N}{N-2} & \mbox{if}\ N\geq3\\+\infty & \mbox{if}\ N=1,2\end{array}\right.$ (the critical Sobolev exponent).
Let $F_0(z,x)=\int^x_0f_0(z,s)ds$ and consider the $C^1$-functional $\varphi_0:H^1(\Omega)\rightarrow\RR$ defined by
$$
\varphi_0(u) = \frac{1}{2}\gamma(u) - \int_\Omega F_0(z,u)dz\ \mbox{for all}\ u\in H^1(\Omega).
$$

As in Papageorgiou \& R\u{a}dulescu \cite[Proposition 8]{10}, using the regularity theory of Wang \cite{15}, we have the following result.

\begin{prop}\label{prop4}
	Assume that $u_0\in H^1(\Omega)$ is a local $C^1(\overline\Omega)$-minimizer of $\varphi_0(\cdot)$, that is, there exists $\rho_1>0$ such that
	$$
	\varphi_0(u_0)\leq\varphi_0(u_0+h)\ \mbox{for all}\ h\in C^1(\overline\Omega), \ ||h||_{C^1(\overline\Omega)}\leq \rho_1.
	$$
	Then $u_0\in C^{1,\alpha}(\overline\Omega)$ with $0<\alpha<1$ and $u_0$ is also a local $H^1(\Omega)$-minimizer of $\varphi_0$, that is, there exists $\rho_2>0$ such that
	$$
	\varphi_0(u_0)\leq\varphi_0(u_0+h)\ \mbox{for all}\ h\in C^1(\overline\Omega),\ ||h||\leq \rho_2.
	$$
\end{prop}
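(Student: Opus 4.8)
The plan is to run the Brezis--Nirenberg scheme for the ``$C^1$ versus $H^1$ local minimizer'' alternative (exactly as in \cite[Proposition 8]{10}), with the classical elliptic regularity replaced by the regularity theory of Wang \cite{15}, which is what is available here because $\xi\in L^s(\Omega)$ with $s>N$. The first step is to show that $u_0$ is already a $C^{1,\alpha}$ weak solution. For every $\psi\in C^1(\overline\Omega)$ the map $t\mapsto\varphi_0(u_0+t\psi)$ is $C^1$ and has a local minimum at $t=0$ (since $||t\psi||_{C^1(\overline\Omega)}\le\rho_1$ for $|t|$ small), hence $\langle\varphi_0'(u_0),\psi\rangle=0$; since $C^1(\overline\Omega)$ is dense in $H^1(\Omega)$ and $\varphi_0'(u_0)\in H^1(\Omega)^*$, we get $\varphi_0'(u_0)=0$, i.e. $u_0$ weakly solves $-\Delta u_0+\xi(z)u_0=f_0(z,u_0)$ in $\Omega$, $\frac{\partial u_0}{\partial n}+\beta(z)u_0=0$ on $\partial\Omega$. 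Wang's regularity theory \cite{15} (first an $L^\infty$-bound, then Schauder-type estimates) then yields $u_0\in C^{1,\alpha}(\overline\Omega)$ for some $\alpha\in(0,1)$; in particular $-\Delta u_0=f_0(z,u_0)-\xi(z)u_0\in L^s(\Omega)$. This gives the regularity asserted in the statement.

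Next I would argue by contradiction. If $u_0$ were not a local $H^1(\Omega)$-minimizer, there would be a sequence $\varepsilon_n\downarrow0$ with $\inf\{\varphi_0(u_0+h):||h||\le\varepsilon_n\}<\varphi_0(u_0)$. On the (bounded, weakly compact) ball $\overline B_n=\{h\in H^1(\Omega):||h||\le\varepsilon_n\}$ the functional $h\mapsto\varphi_0(u_0+h)$ is sequentially weakly lower semicontinuous --- $\gamma$ is weakly l.s.c., and $h\mapsto\int_\Omega F_0(z,u_0+h)\,dz$ is weakly continuous via the compact embeddings $H^1(\Omega)\hookrightarrow L^r(\Omega)$ ($r<2^*$) and $H^1(\Omega)\hookrightarrow L^2(\partial\Omega)$ --- so the infimum is attained at some $h_n\in\overline B_n$, with $\varphi_0(u_0+h_n)<\varphi_0(u_0)$; thus $h_n\ne0$, and $||h_n||\le\varepsilon_n\to0$, so $u_n:=u_0+h_n\to u_0$ in $H^1(\Omega)$. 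If $||h_n||<\varepsilon_n$, then $u_n$ is an interior minimizer and $\varphi_0'(u_n)=0$; if $||h_n||=\varepsilon_n$, then (the constraint qualification $A(h_n)\ne0$ holds since $h_n\ne0$) the Lagrange multiplier rule for $||h||^2\le\varepsilon_n^2$ gives $\hat\mu_n\ge0$ with $\varphi_0'(u_n)=-\hat\mu_n A(h_n)$ in $H^1(\Omega)^*$, where $A$ is the Riesz map of $H^1(\Omega)$; in the first case put $\hat\mu_n=0$.

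Writing this equation out, using $\varphi_0'(u_0)=0$ to replace $A(u_0)$, and dividing by $1+\hat\mu_n\ge1$, each $u_n$ weakly solves a Robin problem
$$-\Delta u_n+\hat\xi_n(z)u_n=\hat g_n(z)\ \text{in}\ \Omega,\qquad \frac{\partial u_n}{\partial n}+\hat\beta_n(z)u_n=\hat\psi_n(z)\ \text{on}\ \partial\Omega,$$
with $\hat\xi_n=\frac{\xi+\hat\mu_n}{1+\hat\mu_n}$, $\hat\beta_n=\frac{\beta}{1+\hat\mu_n}\ge0$, $\hat g_n=\frac{1}{1+\hat\mu_n}f_0(z,u_n)+\frac{\hat\mu_n}{1+\hat\mu_n}\left(-\Delta u_0+u_0\right)$ and $\hat\psi_n=-\frac{\hat\mu_n}{1+\hat\mu_n}\beta u_0$. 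Because $0\le\frac{\hat\mu_n}{1+\hat\mu_n}\le1$ and $u_0\in C^{1,\alpha}(\overline\Omega)$ with $-\Delta u_0\in L^s(\Omega)$, the family $\{\hat\xi_n\}$ is bounded in $L^s(\Omega)$, $\{\hat\beta_n\}$ in $W^{1,\infty}(\partial\Omega)$, $\{\hat\psi_n\}$ in $C^{0,\alpha}(\partial\Omega)$, all uniformly in $n$. A Moser/De Giorgi-type iteration for this family (the coefficients sitting in a fixed bounded set, thanks to the division) gives a uniform bound $||u_n||_\infty\le c$; then $\{f_0(z,u_n)\}$ is bounded in $L^\infty(\Omega)$, hence $\{\hat g_n\}$ is bounded in $L^s(\Omega)$, and Wang's regularity \cite{15} yields $\{u_n\}$ bounded in $C^{1,\alpha'}(\overline\Omega)$ for some $\alpha'\in(0,1)$. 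By the compact embedding $C^{1,\alpha'}(\overline\Omega)\hookrightarrow C^1(\overline\Omega)$, together with $u_n\to u_0$ in $H^1(\Omega)$ (which pins down the limit), $u_n\to u_0$ in $C^1(\overline\Omega)$. Hence $||h_n||_{C^1(\overline\Omega)}=||u_n-u_0||_{C^1(\overline\Omega)}\le\rho_1$ for $n$ large, so the local $C^1$-minimality of $u_0$ forces $\varphi_0(u_0)\le\varphi_0(u_0+h_n)$, contradicting $\varphi_0(u_0+h_n)<\varphi_0(u_0)$. Therefore $u_0$ is a local $H^1(\Omega)$-minimizer, with $\rho_2$ equal to any $\varepsilon$ for which the constrained infimum coincides with $\varphi_0(u_0)$.

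The main obstacle is the uniform $L^\infty$-bound on $\{u_n\}$: a priori the multipliers $\hat\mu_n$ may be unbounded, and one has to check that the multiplier term does not spoil the structure needed for the De Giorgi/Moser iteration. Dividing by $1+\hat\mu_n$ is precisely what keeps $\hat\xi_n,\hat\beta_n,\hat\psi_n$ and the extra source $\frac{\hat\mu_n}{1+\hat\mu_n}(-\Delta u_0+u_0)$ bounded independently of $n$, so the $L^\infty$- and then the $C^{1,\alpha'}$-estimates are uniform; everything else is the standard machinery. (A minor side point is the weak lower semicontinuity used to produce $h_n$: it is immediate for subcritical growth $r<2^*$, which covers all the applications in this paper, and in the borderline case $r=2^*$ it requires an additional concentration-type argument on the small ball $\overline B_n$.)
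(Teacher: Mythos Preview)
Your proposal is correct and follows exactly the route the paper indicates. Note that the paper does not actually prove Proposition~\ref{prop4}: it only states the result, preceded by the sentence ``As in Papageorgiou \& R\u{a}dulescu \cite[Proposition 8]{10}, using the regularity theory of Wang \cite{15}, we have the following result.'' Your argument is precisely the Brezis--Nirenberg scheme from \cite{10} adapted to the Robin setting with indefinite potential, with Wang's regularity \cite{15} supplying the uniform $L^\infty$ and $C^{1,\alpha'}$ bounds; the division by $1+\hat\mu_n$ to tame the possibly unbounded multipliers is the standard device in that proof, and your identification of the resulting coefficients $\hat\xi_n,\hat\beta_n,\hat g_n,\hat\psi_n$ and their uniform bounds is correct.

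One small remark: the paper's hypothesis allows the borderline case $r=2^*$, and you correctly flag that the weak lower semicontinuity of $h\mapsto\int_\Omega F_0(z,u_0+h)\,dz$ on $\overline B_n$ then needs an extra word. In the applications of this paper the reaction has linear growth (see (\ref{eq6})), so the issue does not arise; in the general critical case one can instead minimize over the closed ball in $H^1(\Omega)$ using Ekeland's variational principle to produce an almost-critical sequence and pass to the limit, which is the route taken in several of the cited references.
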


Now we will recall some definitions and facts from Morse theory (critical groups). So, let $X$ be a Banach space, $\varphi\in C^1(X,\RR)$ and $c\in\RR$. We introduce the following sets:
$$
\begin{array}{ll}
	\varphi^c = \{u\in X:\varphi(u)\leq c\},\\
	K_\varphi = \{u\in X:\varphi'(u)=0\},\\
	K^c_\varphi = \{u\in K_\varphi:\varphi(u)=c\}.
\end{array}
$$

Given a topological pair $(Y_1,Y_2)$ such that $Y_2\subseteq Y_1\subseteq X$, for every $k\in\NN_0$, we denote by $H_k(Y_1,Y_2)$  the $k$th-relative singular homology group for the pair $(Y_1,Y_2)$ with integer coefficients. Suppose that $u\in K^c_\varphi$ is isolated. The critical groups of $\varphi$ at $u$ are defined by
$$
C_k(\varphi,u) = H_k(\varphi^c\cap U,\varphi^c\cap U\backslash\{u\})\ \mbox{for all}\ k\in\NN_0,
$$
with $U$ being a neighbourhood of $u$ such that $K_\varphi\cap\varphi^c\cap U=\{u\}$. The excision property of singular homology, implies that the above definition of critical groups is independent of the isolating neighbourhood $U$. If $u$ is a local minimizer of $\varphi$, then
$$
C_k(\varphi,u) = \delta_{k,0}\ZZ\ \mbox{for all}\ k\in\NN_0.
$$
Here, $\delta_{k,m}$ denotes the Kronecker symbol defined by
$$
\delta_{k,m}=\left\{
	\begin{array}{ll}
		1 & \mbox{if}\ k=m,\\
		0 & \mbox{if}\ k\neq m.
	\end{array}
\right.
$$

Next, let us fix our notation. If $x\in\RR$, we set $x^\pm=\max\{\pm x,0\}$. Then for $u\in W^{1,p}(\Omega)$, we define $u^\pm(\cdot)= u(\cdot)^\pm$. We know that
$$
u^\pm\in W^{1,p}(\Omega),\ u=u^+-u^-,\ |u|=u^++u^-.
$$

Given a measurable function $g:\Omega\times\RR\rightarrow\RR$ (for example, a Carath\'eodory function), we denote by $N_g(\cdot)$  the Nemitsky (superposition) map defined by
$$
N_g(u)(\cdot) = g(\cdot,u(\cdot))\ \mbox{for all}\ u\in W^{1,p}(\Omega).
$$

Also, $A\in\mathscr{L}(H^1(\Omega),H^1(\Omega)^*)$ is defined by
$$
\langle A(u),h\rangle = \int_\Omega(Du,Dh)_{\RR^N}dz\ \mbox{for all}\ u,h\in H^1(\Omega).
$$

The hypotheses on the nonlinearity $f(z,x)$, are the following:

\smallskip
$H(f):$ $f:\Omega\times\RR\rightarrow\RR$ is a Carath\'eodory function such that $f(z,0)=0$ for almost all $z\in\Omega$ and
\begin{itemize}
	\item [(i)] for every $\rho>0$, there exists $a_\rho\in L^\infty(\Omega)$ such that
		$$
		|f(z,x)|\leq a_\rho(z)\ \mbox{for almost all}\ z\in\Omega,\ \mbox{all}\ |x|\leq\rho;
		$$
	\item [(ii)] there exist functions $\eta,\hat{\eta}\in L^\infty(\Omega)$ and $m\in\NN,m\geq2$ such that
		$$
		\begin{array}{ll}
			\hat\lambda_1\leq\eta(z)\leq\hat\eta(z)\leq\hat\lambda_m\ \mbox{for almost all}\ z\in\Omega,\ \eta\not\equiv\hat\lambda_1,\ \hat\eta\not\equiv\hat\lambda_m,	\\
			\hat\eta(z)\leq\liminf_{x\rightarrow+\infty}\frac{f(z,x)}{x}\leq\limsup_{x\rightarrow+\infty}\frac{f(z,x)}{x}\leq\hat\eta(z)\ \mbox{uniformly for almost all}\ z\in\Omega
		\end{array}
		$$
		and there exists $\tilde\eta>0$ such that
		$$
		-\hat\eta\leq\liminf_{x\rightarrow-\infty}\frac{f(z,x)}{x}\leq\limsup_{x\rightarrow-\infty}\frac{f(z,x)}{x	}\leq\hat\lambda_1\ \mbox{uniformly for almost all}\ z\in\Omega;
		$$
	\item [(iii)] if $F(z,x)=\int^x_0 f(z,s)ds$, then
		$$
		\begin{array}{ll}
		f(z,x)x - 2F(z,x)\rightarrow+\infty\ \mbox{uniformly for almost all}\ z\in\Omega\ \mbox{as}\ x\rightarrow-\infty, \\	
		f(z,x)x - 2F(z,x)\geq0\ \mbox{for almost all}\ z\in\Omega,\ \mbox{all}\ x\geq M_0>0, \\
		F(z,x)\leq \frac{\hat\lambda_m}{2} x^2\ \mbox{for almost all}\ z\in\Omega,\ \mbox{all}\ x\in\RR;
		\end{array}
		$$
	\item [(iv)] there exist functions $\vartheta,\hat\vartheta\in L^\infty(\Omega)$ and $l\in\NN, l\geq m$ such that
		$$
		\begin{array}{ll}
			\hat\lambda_l\leq\vartheta(z)\leq\hat\vartheta(z)\leq\hat\lambda_{l+1}\ \mbox{for almost all}\ z\in\Omega,\ \vartheta\not\equiv\hat\lambda_l, \ \hat\vartheta\neq\hat\lambda_{l+1}, \\
			\vartheta(z)\leq\liminf_{x\rightarrow0}\frac{f(z,x)}{x}\leq\limsup_{x\rightarrow0}\frac{f(z,x)}{x}\leq\hat\vartheta(z)\ \mbox{uniformly for almost all}\ z\in\Omega.
		\end{array}
		$$
\end{itemize}

\begin{remark}
	Hypothesis $H(f)(ii)$ implies that $f(z,\cdot)$ has asymmetric behaviour as $x\rightarrow\pm\infty$ (jumping nonlinearity). Moreover, as $x\rightarrow-\infty$ we can have resonance with respect to the principal eigenvalue $\hat\lambda_1$. Hypothesis $H(f)(iii)$ implies that this resonance is from the left of $\hat\lambda_1$ in the sense that
	$$
	\hat\lambda_1 x^2-2F(z,x)\rightarrow+\infty\ \mbox{uniformly for almost all}\  z\in\Omega\ \mbox{as}\ x\rightarrow-\infty.
 	$$
\end{remark}

Note that hypotheses $H(f)(i),(ii),(iv)$ imply that
\begin{equation}\label{eq6}
	|f(z,x)|\leq c_3|x|\ \mbox{for almost all}\ z\in\Omega,\ \mbox{all}\ x\in\RR,\ \mbox{some}\ c_3>0.
\end{equation}

For every $\lambda>0$, let $\varphi_\lambda:H^1(\Omega)\rightarrow\RR$ be the energy functional for problem (\ref{eqP}) defined by
$$
\varphi_\lambda(u) = \frac{1}{2}\gamma(u) + \frac{\lambda}{q}||u||^q_q - \int_\Omega F(z,u)dz\ \mbox{for all}\ u\in H^1(\Omega).
$$

Evidently, $\varphi_\lambda\in C^1(H^1(\Omega), \RR)$.

Let $\mu>0$ be as in (\ref{eq3}). We introduce the following truncations-perturbations of the reaction in problem (\ref{eqP}):
\begin{equation}\label{eq7}
	\begin{array}{ll}
	k^+_\lambda(z,x)=\left\{
		\begin{array}{ll}
			0 & \mbox{if}\ x\leq0 \\
			f(z,x)-\lambda x^{q-1} + \mu x & \mbox{if}\ x>0
		\end{array}
	\right. \\
	k^-_\lambda(z,x)=\left\{
		\begin{array}{ll}
			f(z,x)-\lambda |x|^{q-2}x + \mu x & \mbox{if}\ x<0 \\
			0 & \mbox{if}\ x\geq0.
		\end{array}
	\right.		
	\end{array}
\end{equation}

Both are Carath\'eodory functions. We set $K^\pm_\lambda(z,x)=\int^x_0 k^\pm_\lambda(z,s)ds$ and consider the $C^1$-functionals $\hat\varphi^\pm_\lambda: H^1(\Omega)\rightarrow\RR$ defined by
$$
\hat\varphi^\pm_\lambda(u) = \frac{1}{2}\gamma(u) + \frac{\mu}{2}||u||^2_2 - \int_\Omega K^\pm_\lambda(z,u)dz\ \mbox{for all}\ u\in H^1(\Omega).
$$

\section{Compactness conditions for the functionals}

We consider the functionals $\hat\varphi^\pm_\lambda,\varphi_\lambda$ and we show that they satisfy the compactness-type condition

\begin{prop}\label{prop5}
	If hypotheses $H(\xi),\ H(\beta),\ H(f)$ hold, then for every $\lambda>0$ the functional $\hat\lambda^+_\lambda$ satisfies the C-condition.
\end{prop}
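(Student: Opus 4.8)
The plan is to verify the Cerami condition for $\hat\varphi^+_\lambda$ in two stages: first show that every Cerami sequence is bounded in $H^1(\Omega)$, and then upgrade weak convergence to strong convergence using the structure of the operator $u\mapsto A(u)+(\xi+\mu)u$. Let $\{u_n\}_{n\ge1}\subseteq H^1(\Omega)$ be a sequence with $|\hat\varphi^+_\lambda(u_n)|\le M_1$ for all $n$ and $(1+\|u_n\|)(\hat\varphi^+_\lambda)'(u_n)\to0$ in $H^1(\Omega)^*$. The second condition gives, for every $h\in H^1(\Omega)$,
$$
\left|\langle A(u_n),h\rangle+\int_\Omega(\xi(z)+\mu)u_nh\,dz-\int_\Omega k^+_\lambda(z,u_n)h\,dz\right|\le\frac{\varepsilon_n\|h\|}{1+\|u_n\|},\qquad\varepsilon_n\to0^+.
$$

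First I would handle the negative part $u_n^-$. Testing the above with $h=-u_n^-\in H^1(\Omega)$ and recalling from \eqref{eq7} that $k^+_\lambda(z,x)=0$ for $x\le0$, the reaction term drops out and one is left with $\gamma(u_n^-)+\mu\|u_n^-\|_2^2\le\varepsilon_n$ (modulo a harmless sign bookkeeping), so by \eqref{eq3} we get $c_0\|u_n^-\|^2\le\varepsilon_n$, hence $u_n^-\to0$ in $H^1(\Omega)$. Thus it remains to bound $u_n^+$. Here I would argue by contradiction: suppose $\|u_n^+\|\to\infty$ and set $y_n=u_n^+/\|u_n^+\|$, so $\|y_n\|=1$ and, passing to a subsequence, $y_n\xrightarrow{w}y$ in $H^1(\Omega)$, $y_n\to y$ in $L^2(\Omega)$ and in $L^2(\partial\Omega)$, with $y\ge0$. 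Dividing the tested identity (with generic $h$) by $\|u_n^+\|$ and using the linear growth bound \eqref{eq6} on $f$ together with the bound $\lambda|x|^{q-1}/\|u_n^+\|\to0$ (since $q<2$), the quotients $k^+_\lambda(z,u_n^+)/\|u_n^+\|$ are bounded in $L^2(\Omega)$; by hypothesis $H(f)(ii)$ (the $x\to+\infty$ asymptotics pinning $f(z,x)/x$ between $\eta$ and $\hat\eta$) one shows $N_{k^+_\lambda}(u_n^+)/\|u_n^+\|\xrightarrow{w}(\hat\eta+\mu)y$ in $L^2(\Omega)$, at least along a further subsequence where the weight stabilizes to some $\tilde\vartheta(z)\in[\eta(z),\hat\eta(z)]$. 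Passing to the limit gives $\langle A(y),h\rangle+\int_\Omega(\xi+\mu)yh\,dz=\int_\Omega(\tilde\vartheta+\mu)yh\,dz$ for all $h$, i.e. $y$ solves $-\Delta y+\xi(z)y=\tilde\vartheta(z)y$ with the Robin condition. Testing with $h=y_n$ and using $\|y_n\|\to\|y\|$ via the Kadec–Klee property of $H^1(\Omega)$ forces $\|y\|=1$, so $y\ne0$; since $\eta\le\tilde\vartheta\le\hat\eta\le\hat\lambda_m$ with $\eta\not\equiv\hat\lambda_1$, Proposition \ref{prop3} (applied with weight functions, or the strict monotonicity of eigenvalues) together with $y\ge0$ and the UCP forces $y\in E(\hat\lambda_1)$, in particular $y(z)>0$ a.e. in $\Omega$; then $u_n(z)\to+\infty$ a.e. on $\Omega$.

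At this point I would extract the contradiction from the energy bound and $H(f)(iii)$. From $|\hat\varphi^+_\lambda(u_n)|\le M_1$ and $(\hat\varphi^+_\lambda)'(u_n)\to0$ tested with $u_n$, forming the combination $2\hat\varphi^+_\lambda(u_n)-\langle(\hat\varphi^+_\lambda)'(u_n),u_n\rangle$, the quadratic terms $\gamma(u_n)+\mu\|u_n\|_2^2$ cancel and one obtains
$$
\int_\Omega\bigl[k^+_\lambda(z,u_n)u_n-2K^+_\lambda(z,u_n)\bigr]dz\le M_2
$$
for some $M_2>0$. Now $k^+_\lambda(z,x)x-2K^+_\lambda(z,x)$ equals, for $x>0$, $f(z,x)x-2F(z,x)-\lambda(2-q)\tfrac{q-2+... }{}$—more precisely $f(z,x)x-2F(z,x)+\lambda\tfrac{2-q}{q}x^q$, which by the second display of $H(f)(iii)$ is bounded below for $x\ge M_0$ and by $H(f)(i)$ is bounded on $[0,M_0]$; hence the integrand is bounded below on all of $\Omega$, and by Fatou's lemma applied on the set $\Omega$ (using $u_n\to+\infty$ a.e. and the superlinear growth of $f(z,\cdot)x-2F(z,\cdot)$—wait, in the positive direction $H(f)(iii)$ only gives nonnegativity, not divergence). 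I would instead use the \emph{negative} direction divergence: since the analysis must also cover the case distinction, the cleaner route is that once $u_n^-\to0$ and $\|u_n^+\|\to\infty$ with $y\in E(\hat\lambda_1)\setminus\{0\}$, one tests with $h=u_n$ in the Cerami identity and uses the refined asymptotic $F(z,x)\le\tfrac{\hat\lambda_m}{2}x^2$ (third line of $H(f)(iii)$) together with Proposition \ref{prop2}(a) on $\hat H_{m-1}$ and a direct estimate on $\overline H_m$ to deduce $\gamma(u_n^+)-\int_\Omega\hat\eta(z)(u_n^+)^2dz$ stays bounded below by $c\|u_n^+\|^2$ up to lower order, contradicting the energy bound. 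The cleanest contradiction, though, comes from plugging $y\in E(\hat\lambda_1)$ back into $2\hat\varphi^+_\lambda(u_n)-\langle(\hat\varphi^+_\lambda)'(u_n),u_n\rangle\ge\int_\Omega[f(z,u_n^+)u_n^+-2F(z,u_n^+)]dz-M_3$ and noting the right side $\to+\infty$ by Fatou once one records that $H(f)(ii),(iii)$ force $f(z,x)x-2F(z,x)\to+\infty$ also as $x\to+\infty$ along the resonant configuration—here one uses that $\hat\eta\le\hat\lambda_m$ but $\hat\eta\not\equiv\hat\lambda_1$ so the quotient genuinely exceeds $\hat\lambda_1$ on a positive-measure set, which combined with the first line of $H(f)(iii)$ (extended via the structure) yields divergence. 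This is the delicate point and the main obstacle: reconciling the one-sided information in $H(f)(iii)$ about the positive direction with the need for a coercivity-type lower bound. I expect the authors resolve it by the Fatou argument just sketched, treating separately the possibility $y\equiv0$ (which is ruled out by Kadec–Klee as above).

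Once boundedness is established, the final step is routine: from $\|u_n\|\le M_4$, pass to a subsequence with $u_n\xrightarrow{w}u$ in $H^1(\Omega)$, $u_n\to u$ in $L^2(\Omega)$ and in $L^2(\partial\Omega)$ (compact embedding and compact trace); test the Cerami identity with $h=u_n-u$, so that $\int_\Omega k^+_\lambda(z,u_n)(u_n-u)dz\to0$ by \eqref{eq6} and strong $L^2$-convergence, $\int_\Omega(\xi+\mu)u_n(u_n-u)dz\to0$ since $\xi\in L^s$, $s>N$, controls this term via Hölder and Sobolev embedding, and therefore $\langle A(u_n),u_n-u\rangle\to0$, i.e. $\|Du_n\|_2\to\|Du\|_2$; combined with $\|u_n\|_2\to\|u\|_2$ this gives $\|u_n\|\to\|u\|$, and the Kadec–Klee property of the Hilbert space $H^1(\Omega)$ yields $u_n\to u$ strongly. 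Hence $\hat\varphi^+_\lambda$ satisfies the C-condition. $\qed$
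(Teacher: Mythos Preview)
Your overall scaffold is right and matches the paper: kill $u_n^-$ by testing with $-u_n^-$, run a blow-up $y_n=u_n^+/\|u_n^+\|$, use Kadec--Klee to get $\|y\|=1$, pass to the limit equation, and finish with another Kadec--Klee step. (Minor slip: you dropped the boundary term $\int_{\partial\Omega}\beta(z)u_nh\,d\sigma$ from the tested identity; it should be carried along and vanishes in the limit by compactness of the trace.)

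The genuine gap is in how you extract the contradiction from the limit equation. Once you have a nontrivial $y\ge0$ solving
\[
-\Delta y+\xi(z)y=\tilde\vartheta(z)y\ \ \text{in}\ \Omega,\qquad \frac{\partial y}{\partial n}+\beta(z)y=0\ \ \text{on}\ \partial\Omega,
\]
the paper is already done: since $\tilde\vartheta\ge\eta\ge\hat\lambda_1$ with $\eta\not\equiv\hat\lambda_1$, Proposition~\ref{prop3} gives $\tilde\lambda_1(\tilde\vartheta)<\tilde\lambda_1(\hat\lambda_1)=1$. But $y$ is an eigenfunction for the eigenvalue $1$, hence corresponds to some $\tilde\lambda_k(\tilde\vartheta)$ with $k\ge2$, and is therefore nodal---contradicting $y\ge0$. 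That is the whole argument; no further use of $H(f)(iii)$ or the energy bound is needed.

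Your version misfires here in two ways. First, you conclude ``$y\in E(\hat\lambda_1)$'', which mixes up the weighted and unweighted eigenspaces; in fact $y\in E(\hat\lambda_1)$ would force $\tilde\vartheta\equiv\hat\lambda_1$ on $\{y>0\}$, which the strict inequality $\eta\not\equiv\hat\lambda_1$ prevents. Second, having missed that the contradiction is already in hand, you try to squeeze divergence out of $f(z,x)x-2F(z,x)$ as $x\to+\infty$ via Fatou. That cannot work under the stated hypotheses: in the positive direction $H(f)(iii)$ only gives $f(z,x)x-2F(z,x)\ge0$ for $x\ge M_0$, not $\to+\infty$ (divergence is assumed only as $x\to-\infty$). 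So the ``delicate point'' you flag is not a delicate point at all---it is a dead end, and the correct route is the nodal-versus-nonnegative contradiction above.
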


\begin{proof}
	We consider a sequence $\{u_n\}_{n\geq1}\subseteq H^1(\Omega)$ such that
	\begin{eqnarray}
		|\hat\varphi^+_\lambda(u_n)|\leq M_1\ \mbox{for some}\ M_1>0,\ \mbox{all}\ n\in\NN, \label{eq8} \\
		(1+||u_n||)(\hat\varphi^+_\lambda)'(u_n)\rightarrow0\ \mbox{in}\ H^1(\Omega)^*\ \mbox{as}\ n\rightarrow+\infty. \label{eq9}
	\end{eqnarray}
	
	From (\ref{eq9}) we have
	\begin{eqnarray}\label{eq10}
		&&|\langle A(u_n),h\rangle + \int_\Omega[\xi(z)+\mu]u_nhdz + \int_{\partial\Omega}\beta(z)u_nhd\sigma - \int_\Omega k^+_{\lambda}(z,u_n)hdz| \leq \frac{\epsilon_n||h||}{1+||u_n||} \\
		&&\mbox{for all}\ h\in H^1(\Omega)\ \mbox{with}\ \epsilon_n\rightarrow0^+.\nonumber
	\end{eqnarray}
	
	In (\ref{eq10}) we choose $h=-u_n^-\in H^1(\Omega)$. Then
	\begin{eqnarray}
		& \gamma(u_n^-) + \mu||u^-_n||^2_2 \leq \epsilon_n\ \mbox{for all}\ n\in\NN\ \mbox{(see (\ref{eq7}))}, \nonumber\\
		\Rightarrow & c_0||u^-_n||^2\leq \epsilon_n\ \mbox{for all}\ n\in\NN\ \mbox{(see (\ref{eq3}))}, \nonumber \\
		\Rightarrow & u^-_n\rightarrow0\ \mbox{in}\ H^1(\Omega)\ \mbox{as}\ n\rightarrow\infty. \label{eq11}
	\end{eqnarray}
	
	From(\ref{eq10}) and (\ref{eq11}), we have
	\begin{eqnarray}\label{eq12}
		&&|\langle A(u^+_n),h\rangle + \int_\Omega\xi(z)u^+_nhdz + \int_{\partial\Omega}\beta(z)u^+_nhd\sigma - \int_\Omega[f(z,u^+_n) - \lambda(u^+_n)^{q-1}]hdz| \leq \epsilon'_n||h|| \\
		&&\mbox{for all}\ h\in H^1(\Omega),\ \mbox{with}\ \epsilon_n'\rightarrow0^+\ \mbox{(see (\ref{eq7})).}\nonumber
	\end{eqnarray}
	
	We show that $\{u^+_n\}_{n\geq1}\subseteq H^1(\Omega)$ is bounded. Arguing by contradiction, suppose that
	\begin{equation}\label{eq13}
		||u^+_n||\rightarrow\infty\ \mbox{as}\ n\rightarrow\infty.
	\end{equation}
	
	Let $y_n=\frac{u^+_n}{||u^+_n||},\ n\in\NN$. Then $||y_n||=1,\ y_n\geq0$ for all $n\in\NN$. So, we may assume that
	\begin{equation}\label{eq14}
		y_n\xrightarrow{w}y\ \mbox{in}\ H^1(\Omega)\ \mbox{and}\ y_n\rightarrow y\ \mbox{in}\ L^2(\Omega)\ \mbox{and in}\ L^2(\partial\Omega),\ y\geq0.
	\end{equation}
	
	Using (\ref{eq12}) we obtain
	\begin{eqnarray}\label{eq15}
		&&|\langle A(y_n),h\rangle + \int_\Omega\xi(z)y_nhdz + \int_{\partial\Omega}\beta(z)y_nhd\sigma + \frac{\lambda}{||u^+_n||^{2-q}}\int_\Omega y^{q-1}_nhdz-\int_\Omega\frac{N_f(u^+_n)}{||u^+_n||}hdz|\nonumber\\
		&&\hspace{1cm}\leq\frac{\epsilon'||h||}{||u^+_n||}\ \mbox{for all}\ n\in\NN.
	\end{eqnarray}
	
	From (\ref{eq6}) we see that
	\begin{equation}\label{eq16}
		\left\{\frac{N_f(u^+_n)}{||u^+_n||}\right\}_{n\geq1}\subseteq L^2(\Omega)\ \mbox{is bounded}.
	\end{equation}
	
	So, by passing to a subsequence if necessary and using hypothesis $H(f)(ii)$, we have
	\begin{equation}\label{eq17}
		\frac{N_f(u^+_n)}{||u^+_n||}\xrightarrow{w}\nu (z)y\ \mbox{in}\ L^2(\Omega), \ \eta(z)\leq\nu(z)\leq\hat\eta(z)\ \mbox{for almost all}\ z\in\Omega
	\end{equation}
	(see Aizicovici, Papageorgiou \& Staicu \cite{1}, proof of Proposition 16).
	
	If in (\ref{eq15}) we choose $h=y_n-y\in H^1(\Omega)$, pass to the limit as $n\rightarrow\infty$ and use (\ref{eq13}), (\ref{eq14}), (\ref{eq16}) and the fact that $q<2$, then
	\begin{eqnarray}
		&& \lim_{n\rightarrow\infty}\langle A(y_n),y_n-y\rangle=0, \nonumber \\
		&\Rightarrow & ||Dy_n||_2\rightarrow||Dy||_2, \nonumber \\
		&\Rightarrow & y_n\rightarrow y\ \mbox{in}\ H^1(\Omega)\ \mbox{(by the Kadec-Klee property), hence}\ ||y||=1. \label{eq18}
	\end{eqnarray}
	
	In (\ref{eq15}) we pass to the limit as $n\rightarrow\infty$ and use (\ref{eq17}). We obtain
	\begin{eqnarray}\label{eq19}
		&& \langle A(y),h\rangle + \int_\Omega\xi(z)yhdz + \int_{\partial\Omega}\beta(z)yhd\sigma = \int_\Omega \nu (z)yhdz\ \mbox{for all}\ h\in H^1(\Omega), \nonumber \\
		&\Rightarrow & -\Delta y(z) + \xi(z)y(z)=\nu(z)y(z)\ \mbox{for almost all}\ z\in\Omega, \nonumber \\
		&& \frac{\partial y}{\partial n} + \beta(z)y=0\ \mbox{on}\ \partial\Omega\ \mbox{(see Papageorgiou \& R\u{a}dulescu \cite{9}).}
	\end{eqnarray}
	
	From (\ref{eq17}) and Proposition \ref{prop3}, we have
	\begin{equation}\label{eq20}
		\tilde\lambda_1(\nu)<\tilde\lambda_1(\hat\lambda_1)=1.
	\end{equation}
	
	Then (\ref{eq19}), (\ref{eq20}) and the fact that $||y||=1$ (see (\ref{eq18})) imply that
	$$
	y(\cdot)\ \mbox{must be nodal.}
	$$
	
	But this contradicts (\ref{eq14}). Therefore
	$$
		\begin{array}{ll}
			& \{u^+_n\}_{n\geq1}\subseteq H^1(\Omega)\ \mbox{is bounded}, \\
			\Rightarrow & \{u_n\}_{n\geq1}\subseteq H^1(\Omega)\ \mbox{is bounded (see (\ref{eq11}))}.
		\end{array}
	$$
	
	We may assume that
	\begin{equation}\label{eq21}
		u_n\xrightarrow{w}u\ \mbox{in}\ H^1(\Omega)\ \mbox{and}\ u_n\rightarrow u\ \mbox{in}\ L^2(\Omega)\ \mbox{and in}\ L^2(\partial\Omega).
	\end{equation}
	
	In (\ref{eq10}) we choose $h=u_n-u\in H^1(\Omega)$, pass to the limit as $n\rightarrow\infty$ and use (\ref{eq21}) and (\ref{eq6}). Then
	$$
		\begin{array}{ll}
			& \lim_{n\rightarrow\infty}\langle A(u_n),u_n-u\rangle=0, \\
			\Rightarrow & u_n\rightarrow u\ \mbox{in}\ H^1(\Omega)\ \mbox{(again by the Kadec-Klee property)}, \\
			\Rightarrow & \hat\varphi^+_\lambda\ \mbox{satisfies the C-condition}.
		\end{array}
	$$
The proof is now complete.
\end{proof}

\begin{prop}\label{prop6}
	If hypotheses $H(\xi),\ H(\beta),\ H(f)$ hold, then for every $\lambda>0$ the functional $\hat\varphi^-_\lambda$ is coercive.
\end{prop}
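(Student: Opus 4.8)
The plan is to decompose $\hat\varphi^-_\lambda$ along $u=u^+-u^-$ and reduce the claim to a resonance-from-the-left estimate at $\hat\lambda_1$. First I would make $\hat\varphi^-_\lambda$ explicit: since $k^-_\lambda(z,\cdot)\equiv0$ on $[0,\infty)$ while $k^-_\lambda(z,x)=f(z,x)-\lambda|x|^{q-2}x+\mu x$ for $x<0$, we get $K^-_\lambda(z,x)=F(z,x)-\frac{\lambda}{q}|x|^q+\frac{\mu}{2}x^2$ for $x<0$ and $K^-_\lambda(z,x)=0$ for $x\geq0$. Using $\gamma(u)=\gamma(u^+)+\gamma(u^-)$ and $\|u\|_2^2=\|u^+\|_2^2+\|u^-\|_2^2$, a short computation yields
$$\hat\varphi^-_\lambda(u)=\Big[\frac12\gamma(u^+)+\frac{\mu}{2}\|u^+\|_2^2\Big]+\psi(u^-),\qquad \psi(v):=\frac12\gamma(v)-\int_\Omega F(z,-v)\,dz+\frac{\lambda}{q}\|v\|_q^q.$$
By (\ref{eq3}) the bracket is $\geq\frac{c_0}{2}\|u^+\|^2$, and since $\|u\|^2=\|u^+\|^2+\|u^-\|^2$ it suffices to show that $\psi$ is coercive on the positive cone $\{v\in H^1(\Omega):v\geq0\}$ (and bounded below there).

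For this I argue by contradiction: suppose $v_n\geq0$, $\|v_n\|\to\infty$, $\psi(v_n)\leq M$. Write $v_n=t_n\hat u_1+\bar v_n$ with $\bar v_n\in\hat H_1$ and $t_n=\int_\Omega v_n\hat u_1\,dz\geq0$ (as $v_n,\hat u_1\geq0$). Since the decomposition is $\gamma$- and $L^2$-orthogonal, $\gamma(v_n)=t_n^2\hat\lambda_1+\gamma(\bar v_n)$ and $\|v_n\|_2^2=t_n^2+\|\bar v_n\|_2^2$, so
$$\psi(v_n)=\frac12\big[\gamma(\bar v_n)-\hat\lambda_1\|\bar v_n\|_2^2\big]+\frac12\int_\Omega\big[\hat\lambda_1 v_n^2-2F(z,-v_n)\big]\,dz+\frac{\lambda}{q}\|v_n\|_q^q.$$
Now (\ref{eq6}) gives $|F(z,x)|\leq\frac{c_3}{2}x^2$, and $H(f)(iii)$ — in the equivalent form $\hat\lambda_1 x^2-2F(z,x)\to+\infty$ uniformly as $x\to-\infty$ noted in the Remark — provides $C_*>0$ with $\hat\lambda_1 x^2-2F(z,x)\geq-C_*$ for a.a. $z\in\Omega$, all $x\leq0$; hence the middle integral is $\geq-C_*|\Omega|$. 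Combining with Proposition \ref{prop2}(a) (applied with $\vartheta\equiv\hat\lambda_1$, $m=2$), which gives $c_1\|\bar v_n\|^2\leq\gamma(\bar v_n)-\hat\lambda_1\|\bar v_n\|_2^2$, we obtain $\psi(v_n)\geq\frac{c_1}{2}\|\bar v_n\|^2-\frac{C_*|\Omega|}{2}$. In particular $\psi\geq-\frac{C_*|\Omega|}{2}$ on the cone, $\{\bar v_n\}$ is bounded in $H^1(\Omega)$, and since $\|v_n\|\to\infty$ this forces $t_n\to+\infty$.

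Next set $w_n=v_n/\|v_n\|$ and $\hat w=\hat u_1/\|\hat u_1\|$. Boundedness of $\{\bar v_n\}$ makes the $\hat H_1$-component of $w_n$ tend to $0$, and since $\|w_n\|=1$ the coefficient of $\hat w$ tends to $1$; hence $w_n\to\hat w$ in $H^1(\Omega)$. Since $\hat u_1(z)>0$ for all $z\in\Omega$ (strong maximum principle), along a subsequence with $w_n(z)\to\hat w(z)>0$ a.e. we get $v_n(z)=\|v_n\|w_n(z)\to+\infty$ for a.a. $z\in\Omega$, so $\hat\lambda_1 v_n(z)^2-2F(z,-v_n(z))\to+\infty$ a.e.; as this integrand is $\geq-C_*$, Fatou's lemma gives $\int_\Omega[\hat\lambda_1 v_n^2-2F(z,-v_n)]\,dz\to+\infty$. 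Discarding the two nonnegative terms in the last displayed identity then forces $\psi(v_n)\to+\infty$, contradicting $\psi(v_n)\leq M$. Thus $\psi$ is coercive and bounded below on the cone. Finally, if $\|u_n\|\to\infty$ while $\hat\varphi^-_\lambda(u_n)$ stayed bounded, then $\hat\varphi^-_\lambda(u_n)\geq\frac{c_0}{2}\|u_n^+\|^2-\frac{C_*|\Omega|}{2}$ would bound $\{u_n^+\}$, hence $\|u_n^-\|\to\infty$ while $\psi(u_n^-)\leq\hat\varphi^-_\lambda(u_n)$ stays bounded, contradicting coercivity of $\psi$ on the cone. Therefore $\hat\varphi^-_\lambda$ is coercive.

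The \emph{main obstacle} is the resonant regime $x\to-\infty$ in the second and third paragraphs: because $-\int_\Omega F(z,-v)\,dz$ is tied to $\frac12\int_\Omega(\hat\lambda_1 v^2-2F(z,-v))\,dz$ only up to the sign-indefinite, merely pointwise-divergent remainder $\gamma(v)-\hat\lambda_1\|v\|_2^2$, no direct energy estimate yields coercivity. One must first use Proposition \ref{prop2}(a) to confine the higher modes $\bar v_n$, then rescale to see that $v_n$ blows up pointwise almost everywhere, and only then invoke the one-sided hypothesis $H(f)(iii)$ through Fatou's lemma. One should also take care to justify the uniform lower bound $\hat\lambda_1 x^2-2F(z,x)\geq-C_*$ for $x\leq0$ (the uniform divergence for $x$ very negative together with the quadratic control (\ref{eq6}) on a compact interval), so that Fatou's lemma applies.
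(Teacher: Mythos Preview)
Your argument is correct and takes a genuinely different route from the paper's own proof. The paper argues by contradiction on the whole functional: it rescales $v_n=u_n/\|u_n\|$, extracts a weak limit $v$, invokes the Dunford--Pettis theorem to identify the weak $L^1$-limit of $K^-_\lambda(\cdot,u_n)/\|u_n\|^2$ as $\tfrac12[\tilde e(z)+\mu](v^-)^2$ with $-\tilde\eta\leq\tilde e\leq\hat\lambda_1$, obtains $\gamma(v^-)\leq\int_\Omega\tilde e(z)(v^-)^2dz$, and then splits into the two cases $\tilde e\not\equiv\hat\lambda_1$ (where Proposition~\ref{prop2} forces $v^-=0$ and eventually $v=0$, a contradiction) and $\tilde e\equiv\hat\lambda_1$ (where $v^-=\tau\hat u_1$ and Fatou's lemma via (\ref{eq25}) yields the contradiction).

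Your approach is more structural: you first split $\hat\varphi^-_\lambda(u)$ exactly along $u^\pm$ (using $\gamma(u)=\gamma(u^+)+\gamma(u^-)$), which isolates a manifestly coercive piece in $u^+$ and reduces everything to coercivity of $\psi$ on the nonnegative cone. You then decompose $v_n=t_n\hat u_1+\bar v_n$ spectrally and use Proposition~\ref{prop2}(a) with $\vartheta\equiv\hat\lambda_1$, $m=2$ to bound $\{\bar v_n\}$ directly, so that no case distinction and no Dunford--Pettis step are needed; strong convergence $w_n\to\hat u_1/\|\hat u_1\|$ and Fatou finish the job. This is cleaner and more elementary for this particular problem. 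The paper's template, on the other hand, is the standard ``asymptotic quotient'' technique that transfers verbatim to situations where neither the $u^\pm$-splitting of $\gamma$ nor a sharp spectral-gap estimate on $\hat H_1$ is available. One small remark: the uniform divergence $\hat\lambda_1 x^2-2F(z,x)\to+\infty$ as $x\to-\infty$ that you cite from the Remark is actually \emph{derived} inside the paper's proof (see (\ref{eq22})--(\ref{eq25})); your sketch of its justification in the last paragraph is correct, but it is worth noting that this is a lemma to be proved rather than an assumption.
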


\begin{proof}
	According to hypothesis $H(f)(iii)$ given any $\rho>0$, we can find $M_2=M_2(\rho)>0$ such that
	\begin{equation}\label{eq22}
		\rho\leq f(z,x)x - 2F(z,x)\ \mbox{for almost all}\ z\in\Omega,\ \mbox{all}\ x\leq-M_2.
	\end{equation}
	
	We have
	\begin{eqnarray}
		& \frac{d}{dx}\left(\frac{F(z,x)}{x^2}\right) & = \frac{f(z,x)x^2 - 2xF(z,x)}{x^4} \nonumber \\
		& & = \frac{f(z,x)x - 2F(z,x)}{|x|^2x} \nonumber \\
		& & \leq \frac{\rho}{|x|^2x}\ \mbox{for almost all}\ z\in\Omega,\ \mbox{all}\ x\leq-M_2\ \mbox{(see (\ref{eq22}))}, \nonumber \\
		\Rightarrow & \frac{F(z,v)}{v^2}-\frac{F(z,y)}{y^2} & \geq \frac{\rho}{2}\left[\frac{1}{y^2}-\frac{1}{v^2}\right]\ \mbox{for almost all}\ z\in\Omega,\ \mbox{all}\ v\leq y\leq-M_2. \label{eq23}
	\end{eqnarray}
	
	From hypothesis $H(f)(ii)$ we have
	\begin{equation}\label{eq24}
		-\tilde\eta\leq\liminf_{x\rightarrow-\infty}\frac{2F(z,x)}{x^2}\leq\limsup_{x\rightarrow-\infty}\frac{2F(z,x)}{x^2}\leq\hat\lambda_1\ \mbox{uniformly for almost all}\ z\in\Omega.
	\end{equation}
	
	If in (\ref{eq23}) we let $v\rightarrow-\infty$ and use (\ref{eq24}), then
	\begin{eqnarray}\label{eq25}
		& \hat\lambda_1 y^2-2F(z,y)\geq\rho\ \mbox{for almost all}\ z\in\Omega,\ \mbox{all}\ y\leq-M_2, \nonumber \\
		\Rightarrow & \hat\lambda_1y^2 - 2F(z,y)\rightarrow+\infty\ \mbox{uniformly for almost all}\ z\in\Omega\ \mbox{as}\ y\rightarrow -\infty.
	\end{eqnarray}
	
	We proceed by contradiction and assume that $\hat\lambda^-_\lambda$ is not coercive. This means that we can find $\{u_n\}_{n\geq1}\subseteq H^1(\Omega)$ such that
	\begin{equation}\label{eq26}
		||u_n||\rightarrow\infty\ \mbox{as}\ n\rightarrow\infty\ \mbox{and}\ \hat\varphi^-_\lambda(u_n)\leq M_3\ \mbox{for some}\ M_3>0,\ \mbox{all}\ n\in\NN.
	\end{equation}
	
	Let $v_n=\frac{u_n}{||u_n||},\ n\in\NN$. Then $||v_n||=1$ for all $n\in\NN$ and so we may assume that
	\begin{equation}\label{eq27}
		v_n\xrightarrow{w} v\ \mbox{in}\ H^1(\Omega)\ \mbox{and}\ v_n\rightarrow v\ \mbox{in}\ L^2(\Omega)\ \mbox{and in}\ L^2(\partial\Omega).
	\end{equation}
	
	From (\ref{eq26}) we have
	\begin{eqnarray}
		& \frac{1}{2}\gamma(u_n) + \frac{\mu}{2}||u_n||^2_2 - \int_\Omega K^-_\lambda(z,u_n)dz\leq M_3\ \mbox{for all}\ n\in\NN, \nonumber \\
		\Rightarrow & \frac{1}{2}\gamma(v_n) + \frac{\mu}{2}||v_n||^2_2 - \int_\Omega\frac{K^-_\lambda(z,u_n)}{||u_n||^2}dz\leq\frac{M_3}{||u_n||^2}\ \mbox{for all}\ n\in\NN. \label{eq28}
	\end{eqnarray}
	
	From (\ref{eq6}) we obtain
	$$
	\begin{array}{ll}
		& |F(z,x)|\leq\frac{c_3}{2}|x^2|\ \mbox{for almost all}\ z\in\Omega,\ \mbox{all}\ x\in\RR, \\
		\Rightarrow & \left\{\frac{K^-_\lambda(\cdot,u_n(\cdot))}{||u_n||^2}\right\}_{n\geq1}\subseteq L^1(\Omega)\ \mbox{is uniformly integrable (see (\ref{eq7}) and (\ref{eq27}))}.
	\end{array}
	$$
	
	Hence by the Dunford-Pettis theorem and hypothesis $H(f)(ii)$ we have
	\begin{eqnarray}\label{eq29}
		&&\frac{K^-_\lambda(\cdot,u_n(\cdot))}{||u_n||^2}\xrightarrow{w}\frac{1}{2}[\tilde{e}(z)+\mu](v^-)^2\ \mbox{in}\ L^1(\Omega)\ \mbox{as}\ n\rightarrow\infty  \\
		&&\mbox{with}\ -\tilde\eta\leq\tilde{e}(z)\leq\hat\lambda_1\ \mbox{for almost all}\ z\in\Omega\ \mbox{(see \cite{1})}.	\nonumber		
		\end{eqnarray}

	We return to (\ref{eq28}), pass to the limit as $n\rightarrow\infty$ in (\ref{eq26}), (\ref{eq27}), (\ref{eq29}). Since $\gamma(\cdot)$ is sequentially weakly lower semicontinuous on $H^1(\Omega)$, we obtain
	\begin{eqnarray}\label{eq30}
		& \frac{1}{2}\gamma(v) + \frac{\mu}{2}||v||^2_2\leq\frac{1}{2}\int_\Omega[\tilde{e}(z)+\mu](v^-)^2dz \nonumber \\
		\Rightarrow & \gamma(v^-)\leq\int_\Omega\tilde{e}(z)(v^-)^2dz\ \mbox{(see (\ref{eq3}))}.
	\end{eqnarray}
	
	First we assume that $\tilde{e}\not\equiv\hat\lambda_1$ (see (\ref{eq29})). Then from (\ref{eq30}) and Proposition \ref{prop2} we have
	\begin{eqnarray}
		& c_1||v^-||^2\leq0, \nonumber \\
		\Rightarrow & v\geq0. \label{eq31}
	\end{eqnarray}
	
	Then on account of (\ref{eq27}) and (\ref{eq31}), we have
	\begin{equation}\label{eq32}
		v^-_n\xrightarrow{w}0\ \mbox{in}\ H^1(\Omega)\ \mbox{and}\ v^-_n\rightarrow0\ \mbox{in}\ L^2(\Omega)\ \mbox{and in}\ L^2(\partial\Omega).
	\end{equation}
	
	In (\ref{eq28}) we pass to the limit as $n\rightarrow\infty$ and use (\ref{eq32}), (\ref{eq30}) and the sequential weak lower semicontinuity of $\gamma(\cdot)$. We obtain
	$$
	\begin{array}{ll}
		& \gamma(v^+) + \mu||v^+|^2_2\leq0, \\
		\Rightarrow & c_0||v^+||^2\leq0\ \mbox{(see (\ref{eq3}))}, \\
		\Rightarrow & v=0\ \mbox{(see (\ref{eq31}))}.
	\end{array}
	$$
	
	From (\ref{eq28}) we obtain
	$$
	\begin{array}{ll}
		& ||Dv_n||_2\rightarrow0, \\
		\Rightarrow & v_n\rightarrow0\ \mbox{in}\ H^1(\Omega)\ \mbox{(see (\ref{eq27}))},
	\end{array}
	$$
	which contradicts the fact that $||v_n||=1$ for all $n\in\NN$.
	
	Next we assume that $\tilde{e}(z)=\hat\lambda_1$, for almost all $z\in\Omega$. From (\ref{eq30}) and (\ref{eq2}), we have
	\begin{eqnarray}
		& \gamma(v^-)=\hat\lambda_1||v^-||^2_2, \nonumber \\
		\Rightarrow & v^-=\tau\hat{u}_1\ \mbox{for some}\ \tau\geq0. \label{eq33}
	\end{eqnarray}
	
	If $\tau=0$, then $v\geq0$ and arguing as above (see the part of the proof after (\ref{eq31})), we obtain $v=0$, contradicting the fact that $||v_n||=1$ for all $n\in\NN$.
	
	If $\tau>0$, then from (\ref{eq33}) we have
	$$
	v(z)<0\ \mbox{for all}\ z\in\Omega.
	$$
	
	This means that
	$$
	\begin{array}{ll}
		& u^-_n(z)\rightarrow-\infty\ \mbox{for almost all}\ z\in\Omega\ \mbox{as}\ n\rightarrow\infty, \\
		\Rightarrow & \hat\lambda_1u^-_n(z)^2 - 2F(z,u^-_n(z))\rightarrow+\infty\ \mbox{for almost all}\ z\in\Omega\ \mbox{as}\ n\rightarrow\infty\ \mbox{(see (\ref{eq25}))} \\
		\Rightarrow & \int_\Omega[\hat\lambda_1(u^-_n)^2-2F(z,u^-_n)]dz\rightarrow+\infty\ \mbox{as}\ n\rightarrow\infty\ \mbox{(by Fatou's lemma, see (\ref{eq25}))}, \\
		\Rightarrow & \gamma(u^-_n)-2\int_\Omega F(z,-u^-_n)dz\rightarrow+\infty\ \mbox{as}\ n\rightarrow\infty\ \mbox{(see (\ref{eq2}))}, \\
		\Rightarrow & 2\hat\varphi^-_\lambda(u^-_n)\rightarrow+\infty\ \mbox{as}\ n\rightarrow\infty\ \mbox{(see (\ref{eq7}))}.
	\end{array}
	$$
	
	But this contradicts (\ref{eq26}).
	
	We conclude that $\hat\varphi^-_\lambda$ is coercive.
\end{proof}

This proposition leads to the following corollary (see Marano \& Papageorgiou \cite[Proposition 2.2]{4}).

\begin{corollary}\label{cor7}
	If hypotheses $H(\xi), H(\beta), H(f)$ hold, then for every $\lambda>0$ the functional $\hat\varphi^-_\lambda$ satisfies the C-condition.
\end{corollary}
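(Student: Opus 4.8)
The plan is to derive the C-condition directly from the coercivity of $\hat\varphi^-_\lambda$ established in Proposition \ref{prop6}, combined with the same ``$A$ plus Kadec--Klee'' argument used at the end of the proof of Proposition \ref{prop5}.

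First I would take a sequence $\{u_n\}_{n\ge1}\subseteq H^1(\Omega)$ with $|\hat\varphi^-_\lambda(u_n)|\le M_4$ for some $M_4>0$ and all $n\in\NN$, and $(1+||u_n||)(\hat\varphi^-_\lambda)'(u_n)\to0$ in $H^1(\Omega)^*$. Since $\hat\varphi^-_\lambda$ is coercive, the boundedness of $\{\hat\varphi^-_\lambda(u_n)\}_{n\ge1}$ forces $\{u_n\}_{n\ge1}$ to be bounded in $H^1(\Omega)$: otherwise, along a subsequence $||u_n||\to\infty$, which by coercivity would give $\hat\varphi^-_\lambda(u_n)\to+\infty$, a contradiction. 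Hence, passing to a subsequence, $u_n\xrightarrow{w}u$ in $H^1(\Omega)$, and by the compact embedding $H^1(\Omega)\hookrightarrow L^2(\Omega)$ and the compactness of the trace map into $L^2(\partial\Omega)$, also $u_n\to u$ in $L^2(\Omega)$ and in $L^2(\partial\Omega)$.

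Next, from $(1+||u_n||)(\hat\varphi^-_\lambda)'(u_n)\to0$ I obtain, for all $h\in H^1(\Omega)$,
\[
\Big|\langle A(u_n),h\rangle + \int_\Omega[\xi(z)+\mu]u_nh\,dz + \int_{\partial\Omega}\beta(z)u_nh\,d\sigma - \int_\Omega k^-_\lambda(z,u_n)h\,dz\Big|\le\epsilon_n||h||
\]
with $\epsilon_n\to0^+$. Choosing $h=u_n-u$ and letting $n\to\infty$: the potential term vanishes because $\xi\in L^s(\Omega)$ with $s>N$ (so H\"older's inequality together with the Sobolev embedding applies and $||u_n-u||_2\to0$ annihilates it), the boundary term vanishes since $\beta\in L^\infty(\partial\Omega)$ and $u_n\to u$ in $L^2(\partial\Omega)$, and the reaction term vanishes by (\ref{eq6}) (which gives $|k^-_\lambda(z,x)|\le c_3|x|+\lambda|x|^{q-1}+\mu|x|$) together with the $L^2$-convergence and $1<q<2$. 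Therefore $\langle A(u_n),u_n-u\rangle\to0$, hence $||Du_n||_2\to||Du||_2$; combined with $||u_n||_2\to||u||_2$ this yields $||u_n||\to||u||$, and since $u_n\xrightarrow{w}u$ in the Hilbert space $H^1(\Omega)$, the Kadec--Klee property gives $u_n\to u$ in $H^1(\Omega)$. Thus $\hat\varphi^-_\lambda$ satisfies the C-condition.

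There is essentially no obstacle here: the substantive input is the coercivity proved in Proposition \ref{prop6}, and once boundedness of the Cerami sequence is secured, the rest is the routine compactness argument already carried out in the proof of Proposition \ref{prop5}. The only mild point of care is the passage to the limit in the indefinite potential term, which relies on $s>N$; alternatively, one may simply invoke Marano \& Papageorgiou \cite[Proposition 2.2]{4}, which states that a coercive $C^1$-functional automatically satisfies the C-condition.
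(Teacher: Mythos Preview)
Your argument is correct and is essentially the content behind the paper's one-line appeal to Marano \& Papageorgiou \cite[Proposition 2.2]{4}: coercivity (Proposition \ref{prop6}) yields boundedness of any Cerami sequence, and then the same Kadec--Klee step used at the end of Proposition \ref{prop5} upgrades weak to strong convergence. The paper gives no separate proof beyond that citation, so your direct verification is exactly what the cited result provides in this setting.
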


Now we turn our attention to the energy functional $\varphi_\lambda,\ \lambda>0$.

\begin{prop}\label{prop8}
	If hypotheses $H(\xi), H(\beta), H(f)$ then for every $\lambda>0$ the functional $\varphi_\lambda$ satisfies the C-condition.
\end{prop}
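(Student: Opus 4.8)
The plan is to follow the same two-part template used for $\hat\varphi^+_\lambda$ in Proposition \ref{prop5}: first show that any Cerami sequence $\{u_n\}_{n\ge1}\subseteq H^1(\Omega)$ for $\varphi_\lambda$ is bounded, and then extract a strongly convergent subsequence using the compactness of the embeddings, the trace map, and the Kadec--Klee property of $H^1(\Omega)$. So I would start from a sequence satisfying $|\varphi_\lambda(u_n)|\le M$ and $(1+\|u_n\|)\varphi_\lambda'(u_n)\to0$ in $H^1(\Omega)^*$, which gives the two working inequalities
$$
\Bigl|\langle A(u_n),h\rangle + \int_\Omega\xi(z)u_nh\,dz + \int_{\partial\Omega}\beta(z)u_nh\,d\sigma + \lambda\int_\Omega|u_n|^{q-2}u_nh\,dz - \int_\Omega f(z,u_n)h\,dz\Bigr|\le\frac{\epsilon_n\|h\|}{1+\|u_n\|}
$$
for all $h\in H^1(\Omega)$ with $\epsilon_n\to0^+$, together with the energy bound.

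For the negative part, choosing $h=-u_n^-$ and using that the concave term $\lambda\int_\Omega|u_n|^{q-2}u_n(-u_n^-)\,dz=-\lambda\|u_n^-\|_q^q\le0$ works \emph{in our favour}, I obtain $\gamma(u_n^-)+\mu\|u_n^-\|_2^2\le\epsilon_n+\lambda\|u_n^-\|_q^q$; since $q<2$, the $L^q$ term is dominated and, via (\ref{eq3}) and a standard Young-inequality absorption, this forces $\|u_n^-\|\to0$. Wait --- one must be slightly careful here: unlike in Proposition \ref{prop5}, $\hat\varphi^-_\lambda$ is not truncated, so the $L^q$ term does not vanish outright; the point is that $\|u_n^-\|_q^q\le c\|u_n^-\|^q$ and $q<2$, so after moving it to the left and using $c_0\|u_n^-\|^2-\lambda c\|u_n^-\|^q\le\epsilon_n$ one still concludes $u_n^-\to0$ in $H^1(\Omega)$ because $t\mapsto c_0t^2-\lambda c t^q$ is coercive in $t$. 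Then for the positive part I would run exactly the blow-up argument of Proposition \ref{prop5}: set $y_n=u_n^+/\|u_n^+\|$, assume $\|u_n^+\|\to\infty$, pass to a weak limit $y\ge0$, use (\ref{eq6}) and $H(f)(ii)$ to get $N_f(u_n^+)/\|u_n^+\|\xrightarrow{w}\nu(z)y$ with $\eta\le\nu\le\hat\eta$, note the concave term $\lambda\|u_n^+\|^{q-2}\int_\Omega y_n^{q-1}h\,dz\to0$ since $q<2$, deduce via the Kadec--Klee property that $y_n\to y$ with $\|y\|=1$, and obtain that $y$ solves the eigenvalue problem $-\Delta y+\xi(z)y=\nu(z)y$ with Robin boundary condition; Proposition \ref{prop3} gives $\tilde\lambda_1(\nu)<1$, so $y$ must be nodal, contradicting $y\ge0$. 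Hence $\{u_n^+\}$, and therefore $\{u_n\}$, is bounded in $H^1(\Omega)$.

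Once boundedness is in hand, I pass to a subsequence with $u_n\xrightarrow{w}u$ in $H^1(\Omega)$, $u_n\to u$ in $L^2(\Omega)$ and in $L^2(\partial\Omega)$ (compact embedding and compact trace), and also $|u_n|^{q-2}u_n\to|u|^{q-2}u$ in $L^{q'}(\Omega)$ (or at least the corresponding integral against $h=u_n-u$ tends to $0$, using $\{|u_n|^{q-2}u_n\}$ bounded in an $L^r$ with $r$ above the dual Sobolev exponent and strong $L^q$-convergence). Choosing $h=u_n-u$ in the working inequality and sending $n\to\infty$, all terms except $\langle A(u_n),u_n-u\rangle$ vanish, so $\langle A(u_n),u_n-u\rangle\to0$, i.e. $\|Du_n\|_2\to\|Du\|_2$; combined with $u_n\xrightarrow{w}u$ this gives $\|u_n\|\to\|u\|$, and the Kadec--Klee property of the Hilbert space $H^1(\Omega)$ yields $u_n\to u$ in $H^1(\Omega)$. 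Therefore $\varphi_\lambda$ satisfies the C-condition.

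The main obstacle, and the only place the argument genuinely differs from Propositions \ref{prop5}--\ref{cor7}, is handling the untruncated concave term $\lambda|u|^{q-2}u$ on the full space: one must check that it is harmless both in the $u_n^-$ estimate (where it has the right sign and is lower-order because $q<2$) and in the blow-up normalization (where the factor $\|u_n^+\|^{q-2}\to0$ kills it), and that it does not obstruct the final strong-convergence step (where $q<2<2^*$ guarantees the relevant compactness). Everything else is a direct transcription of the earlier proofs.
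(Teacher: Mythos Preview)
Your treatment of $\{u_n^-\}$ has a genuine gap. When you test $\varphi_\lambda'(u_n)$ against $h=-u_n^-$, the nonlinearity does \emph{not} disappear: $\varphi_\lambda$ is not truncated (and carries no $\mu$-perturbation), so what you actually obtain is
\[
\gamma(u_n^-)+\lambda\|u_n^-\|_q^q-\int_\Omega f(z,-u_n^-)(-u_n^-)\,dz\le\epsilon_n,
\]
not $\gamma(u_n^-)+\mu\|u_n^-\|_2^2\le\epsilon_n+\lambda\|u_n^-\|_q^q$. The term $\int_\Omega f(z,-u_n^-)(-u_n^-)\,dz$ cannot be thrown away: by $H(f)(ii)$ the quotient $f(z,x)/x$ may tend to $\hat\lambda_1$ as $x\to-\infty$, so this integral can be as large as $\hat\lambda_1\|u_n^-\|_2^2$, and then $\gamma(u_n^-)-\hat\lambda_1\|u_n^-\|_2^2$ is merely nonnegative (zero along $E(\hat\lambda_1)$). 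This is precisely the resonant situation, and your coercivity-in-$t$ argument $c_0 t^2-\lambda c\,t^q\le\epsilon_n$ does not follow.

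The paper's proof handles this by first combining the energy bound and the derivative bound (test with $h=u_n$) to get
\[
\int_\Omega\bigl[f(z,u_n)u_n-2F(z,u_n)\bigr]\,dz\le M_5,
\]
and then, using the sign condition in $H(f)(iii)$ on $x\ge M_0$, isolating the negative part:
\[
\int_\Omega\bigl[f(z,-u_n^-)(-u_n^-)-2F(z,-u_n^-)\bigr]\,dz\le M_5.
\]
One then runs a blow-up argument on $y_n=u_n^-/\|u_n^-\|$: testing with $h=-u_n^-$ and normalizing gives $\gamma(y)\le\int_\Omega\tilde e(z)y^2\,dz$ with $-\tilde\eta\le\tilde e\le\hat\lambda_1$. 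If $\tilde e\not\equiv\hat\lambda_1$, Proposition~\ref{prop2} forces $y=0$ and one reaches a contradiction as usual; if $\tilde e\equiv\hat\lambda_1$, then $y=\tau\hat u_1$ with $\tau>0$, hence $u_n^-(z)\to+\infty$ a.e., and Fatou together with $H(f)(iii)$ (the hypothesis $f(z,x)x-2F(z,x)\to+\infty$ as $x\to-\infty$) blows up the displayed integral, contradicting the bound $M_5$. Your argument for $\{u_n^+\}$ and the final Kadec--Klee step are fine, but the negative part must go through this resonance dichotomy rather than the shortcut you propose.
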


\begin{proof}
	We consider a sequence $\{u_n\}_{n\geq1}\subseteq H^1(\Omega)$ such that
	\begin{eqnarray}
		|\varphi_\lambda(u_n)|\leq M_4\ \mbox{for some}\ M_4>0,\ \mbox{all}\ n\in\NN, \label{eq34} \\
		(1+||u_n||)\varphi'_\lambda(u_n)\rightarrow0\ \mbox{in}\ H^1(\Omega)^*\ \mbox{as}\ n\rightarrow\infty. \label{eq35}
	\end{eqnarray}
	
	From (\ref{eq35}) we have
	\begin{eqnarray}\label{eq36}
		|\langle A(u_n),h\rangle +\int_\Omega\xi(z)u_nhdz + \int_{\partial\Omega}\beta(z)u_nhd\sigma + \lambda\int_\Omega|u_n|^{q-2}u_nhd\sigma - \\
		\int_\Omega f(z,u_n)hdz| \leq \frac{\epsilon_n||h||}{1+||u_n||}\ \mbox{for all}\ h\in H^1(\Omega),\ \mbox{with}\ \epsilon_n\rightarrow0^+. \nonumber
	\end{eqnarray}
	
	In (\ref{eq36}) we choose $h=u_n\in H^1(\Omega)$. Then
	\begin{equation}\label{eq37}
		-\gamma(u_n) - \lambda||u_n||^q_q - \int_\Omega f(z,u_n)u_ndz\leq\epsilon_n\ \mbox{for all}\ n\in\NN.
	\end{equation}
	
	On the other hand, from (\ref{eq34}) we have
	\begin{equation}\label{eq38}
		\gamma(u_n) + \frac{2\lambda}{q}||u_n||^q_q - \int_\Omega 2F(z,u_n)dz \leq 2M_4\ \mbox{for all}\ n\in\NN.
	\end{equation}
	
	We add (\ref{eq37}) and (\ref{eq38}). Recalling that $q<2$, we obtain
	$$
	\int_\Omega[f(z,u_n)u_n - 2F(z,u_n)]dz\leq M_5\ \mbox{for all}\ n\in\NN.
	$$
	
	Using hypothesis $H(f)(iii)$, we see that
	\begin{equation}\label{eq39}
		\int_\Omega[f(z,-u^-_n)(-u^-_n) - 2F(z,-u^-_n)]dz\leq M_5\ \mbox{for all}\ n\in\NN.
	\end{equation}
	
	We use (\ref{eq39}) to show that $\{u^-_n\}_{n\geq1}\subseteq H^1(\Omega)$ is bounded. Arguing by contradiction, we may assume that
	\begin{equation}\label{eq40}
		||u^-_n||\rightarrow\infty\ \mbox{as}\ n\rightarrow\infty.
	\end{equation}
	
	Let $y_n=\frac{u^-_n}{||u^-_n||},\ n\in\NN$. Then $||y_n||=1,\ y_n\geq0$ for all $n\in\NN$. We may assume that
	\begin{equation}\label{eq41}
		y_n\xrightarrow{w}y\ \mbox{in}\ H^1(\Omega)\ \mbox{and}\ y_n\rightarrow y\ \mbox{in}\ L^2(\Omega)\ \mbox{and in}\ L^2(\partial\Omega),\ y\geq0.
	\end{equation}
	
	In (\ref{eq36}) we choose $h=-u^-_n\in H^1(\Omega)$. Then
	\begin{eqnarray}
		& \gamma(u^-_n) + \lambda||u^-_n||^q_q - \int_\Omega f(z,-u^-_n)(-u^-_n)dz \leq \epsilon_n\ \mbox{for all}\ n\in\NN, \nonumber \\
		\Rightarrow & \gamma(y_n) + \frac{\lambda}{||u^-_n||^{2-q}}||y_n||^q_q - \int_\Omega\frac{N_f(-u^-_n)}{||u^-_n||}y_ndz \leq \frac{\epsilon_n}{||u^-_n||^2}\ \mbox{for all}\ n\in\NN. \label{eq42}
	\end{eqnarray}
	
	From (\ref{eq6}) we see that
	$$
	\left\{\frac{N_f(-u^-_n)}{||u^-_n||}\right\}_{n\geq1}\subseteq L^2(\Omega)\ \mbox{is bounded}.
	$$
	
	So, by passing to a subsequence if necessary and using hypothesis $H(f)(ii)$ we have
	\begin{eqnarray}
		\frac{N_f(-u^-_n)}{||u^-_n||}\xrightarrow{w}\tilde{e}(z)y\ \mbox{in}\ L^2(\Omega)\ \mbox{as}\ n\rightarrow\infty \label{eq43} \\
		\mbox{with} -\tilde\eta\leq\tilde{e}(z)\leq\hat\lambda_1\ \mbox{for almost all}\ z\in\Omega. \nonumber
	\end{eqnarray}
	
	Returning to (\ref{eq42}), passing to the limit as $n\rightarrow\infty$ and using (\ref{eq40}) (recall that $q<2$), (\ref{eq41}), (\ref{eq43}) and the sequential weak lower semicontinuity of $\gamma(\cdot)$, we obtain
	\begin{equation}\label{eq44}
		\gamma(y)\leq\int_\Omega\tilde{e}(z)y^2dz.
	\end{equation}
	
	First we assume that $\tilde{e}\not\equiv\hat\lambda_1$ (see (\ref{eq43})). Then from (\ref{eq44}) and Proposition \ref{prop2}, we have
	$$
		\begin{array}{ll}
			& c_1||y||^2\leq0, \\
			\Rightarrow & y=0.	
		\end{array}
	$$
	
	From this and (\ref{eq42}), we infer that
	$$
		\begin{array}{ll}
			& ||Dy_n||_2\rightarrow0, \\
			\Rightarrow & y_n\rightarrow0\ \mbox{in}\ H^1(\Omega),
		\end{array}
	$$
	which contradicts the fact that $||y_n||=1$ for all $n\in\NN$.
	
	We now assume that $\tilde{e}(z)=\hat\lambda_1$ for almost all $z\in\Omega$. Then from (\ref{eq44}) and (\ref{eq2}) we have
	$$
		y=\tau\hat{u}_1\ \mbox{with}\ \tau\geq0.
	$$
	
	If $\tau=0$, then $y=0$ and as above we have
	$$
		y_n\rightarrow0\ \mbox{in}\ H^1(\Omega),
	$$
	a contradiction since $||y_n||=1$ for all $n\in\NN$.
	
	If $\tau>0$, then $y(z)>0$ for all $z\in\Omega$ and so
	$$
		\begin{array}{ll}
			& u^-_n(z)\rightarrow+\infty\ \mbox{for almost all}\ z\in\Omega,	 \\
			\Rightarrow & f(z,-u^-_n(z))(-u^-_n)(z) - 2F(z,-u^-_n(z))\rightarrow+\infty\ \mbox{for almost all}\ z\in\Omega\\ &\mbox{(see hypothesis $H(f)(iii)$),} \\
			\Rightarrow & \int_\Omega[f(z,-u^-_n)(-u^-_n) - 2F(z,-u^-_n)]dz\rightarrow+\infty\ \mbox{(by Fatou's lemma)}.
		\end{array}
	$$
	
	This contradicts (\ref{eq39}). Therefore
	\begin{equation}\label{eq45}
		\{u^-_n\}_{n\geq1}\subseteq H^1(\Omega)\ \mbox{is bounded}.
	\end{equation}
	
	Next, we show that $\{u^+_n\}_{n\geq1}\subseteq H^1(\Omega)$ is bounded. From (\ref{eq36}) and (\ref{eq45}), we have
	$$
		\begin{array}{rr}
			|\langle A(u^+_n),h\rangle + \int_\Omega\xi(z)u^+_nhdz + \int_{\partial\Omega}\beta(z)u^+_nhd\sigma + \lambda\int_\Omega(u^+_n)^{q-1}hdz - \int_\Omega f(z,u^+_n)hdz| \leq M_6	 \\
			\mbox{for some}\ M_6>0,\ \mbox{all}\ n\in\NN.
		\end{array}
	$$
	
	Using this bound and a contradiction argument as in the proof of Proposition \ref{prop5}, we show that
	$$
		\begin{array}{ll}
			& \{u^+_n\}_{n\geq1}\subseteq H^1(\Omega)\ \mbox{is bounded}, \\
			\Rightarrow & \{u_n\}_{n\geq1}\subseteq H^1(\Omega)\ \mbox{is bounded (see (\ref{eq45}))}.
		\end{array}
	$$
	
	From this, as before (see the proof of Proposition \ref{prop5}), via the Kadec-Klee property, we conclude that $\varphi_\lambda$ satisfies the C-condition.
\end{proof}

\section{Multiplicity theorems}

In this section using variational methods, truncation and perturbation techniques and Morse theory, we prove two multiplicity theorems for problem (\ref{eqP}) when $\lambda>0$ is small. In the first result, we produce four nontrivial smooth solutions, while in the second theorem, under stronger conditions on $f(z,\cdot)$, we establish the existence of five nontrivial smooth solutions.

We start with a result which allows us to satisfy the mountain-pass geometry (see Theorem \ref{th1}) and also distinguish the solutions we produce from the trivial one.

\begin{prop}\label{prop9}
	If hypotheses $H(\xi),\ H(\beta),\ H(f)$ hold, then for every $\lambda>0$, $u=0$ is a local minimizer of $\varphi_\lambda$ and of $\hat\varphi^\pm_\lambda$.
\end{prop}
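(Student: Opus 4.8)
The plan is to show that $u=0$ is a local minimizer of $\varphi_\lambda$ and then derive the same conclusion for $\hat\varphi^\pm_\lambda$ by an almost identical computation (or by a short remark that the truncated functionals agree with $\varphi_\lambda$ on the positive/negative cones near the origin up to the harmless $\frac{\mu}{2}\|u\|_2^2$ term, which only helps). First I would exploit hypothesis $H(f)(iv)$: since $\limsup_{x\to 0}\frac{f(z,x)}{x}\le\hat\vartheta(z)\le\hat\lambda_{l+1}$ uniformly in $z$, for every $\varepsilon>0$ there is $\delta=\delta(\varepsilon)>0$ with
$$
F(z,x)\le\frac12\bigl(\hat\vartheta(z)+\varepsilon\bigr)x^2\quad\text{for a.a. }z\in\Omega,\ \text{all }|x|\le\delta.
$$
Combining this with the global bound $|f(z,x)|\le c_3|x|$ from \eqref{eq6} (hence $|F(z,x)|\le\frac{c_3}{2}x^2$) and the critical-Sobolev estimate, one gets, for all $x\in\RR$,
$$
F(z,x)\le\frac12\bigl(\hat\vartheta(z)+\varepsilon\bigr)x^2+c_4|x|^{2^*}
$$
with $c_4=c_4(\varepsilon)>0$. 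This is the standard device for passing from a pointwise near-zero estimate to one valid on all of $\RR$ at the cost of a higher-order term controlled by the Sobolev embedding $H^1(\Omega)\hookrightarrow L^{2^*}(\Omega)$.

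Next I would insert this into the energy functional. For $u\in H^1(\Omega)$,
$$
\varphi_\lambda(u)\ge\frac12\gamma(u)+\frac{\lambda}{q}\|u\|_q^q-\frac12\int_\Omega\bigl(\hat\vartheta(z)+\varepsilon\bigr)u^2\,dz-c_4\|u\|_{2^*}^{2^*}.
$$
Since $\hat\vartheta\le\hat\lambda_{l+1}$ with $\hat\vartheta\not\equiv\hat\lambda_{l+1}$, I would like to bound $\gamma(u)-\int_\Omega\hat\vartheta u^2\,dz$ from below; but the natural lower bound from Proposition \ref{prop2}(a) is only available on $\hat H_l$, not on all of $H^1(\Omega)$. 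The cleanest fix is to decompose $u=\bar u+\hat u$ with $\bar u\in\overline H_l$ (finite-dimensional) and $\hat u\in\hat H_l$, handle the infinite-dimensional part via $c_1\|\hat u\|^2\le\gamma(\hat u)-\int_\Omega\hat\vartheta\hat u^2$, and on the finite-dimensional part use that $\frac{\lambda}{q}\|u\|_q^q$ together with $-c_4\|u\|_{2^*}^{2^*}$ behaves well: on $\overline H_l$ all norms are equivalent, so the $\|u\|_q^q$ term dominates $\|u\|^2$ for small $\|u\|$ (because $q<2$), beating any negative quadratic contribution from $\gamma(\bar u)-\int_\Omega\hat\vartheta\bar u^2$. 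Concretely, for $\|u\|\le r$ small enough one obtains an estimate of the shape
$$
\varphi_\lambda(u)\ge c_5\|\hat u\|^2+\frac{\lambda}{q}\|u\|_q^q-c_6\|\bar u\|^2-c_7\|u\|^{2^*}
\ge\frac{\lambda}{q}c_8\|\bar u\|^{q}-c_6\|\bar u\|^2+\bigl(c_5-c_7 r^{2^*-2}\bigr)\|\hat u\|^2- c_7\|\hat u\|^{2^*},
$$
and since $q<2<2^*$ the right-hand side is $\ge0=\varphi_\lambda(0)$ for $\|u\|\le r$ with $r$ small (choosing $\varepsilon$ small at the outset so the $\varepsilon u^2$ term is absorbed into $c_5$).

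The main obstacle, and the point requiring care, is exactly this interplay on the finite-dimensional subspace $\overline H_l$: one cannot discard the term $\gamma(\bar u)-\int_\Omega\hat\vartheta\bar u^2\,dz$, which may be negative (indeed $\hat\vartheta\ge\hat\lambda_l$), so its size $\sim\|\bar u\|^2$ must be dominated by the concave term $\frac{\lambda}{q}\|\bar u\|_q^q\sim\|\bar u\|^q$ with $q<2$, which is legitimate only for $\|\bar u\|$ small — hence the restriction to a small ball, which is all that is needed for a local minimizer. This shows $\varphi_\lambda(u)\ge\varphi_\lambda(0)=0$ for $\|u\|\le r$, so $u=0$ is a local (indeed strict, away from $0$) minimizer of $\varphi_\lambda$. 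For $\hat\varphi^\pm_\lambda$ one repeats the argument: on $\{u\ge0\}$ (resp. $\{u\le0\}$) near $0$, $K^+_\lambda(z,x)=F(z,x)-\frac{\lambda}{q}x^q+\frac{\mu}{2}x^2$ (resp. the analogue), so $\hat\varphi^+_\lambda(u)=\frac12\gamma(u)+\frac\mu2\|u\|_2^2+\frac\lambda q\|u^+\|_q^q-\int_\Omega F(z,u^+)\,dz-\frac\mu2\|u^+\|_2^2=\frac12\gamma(u)+\frac\mu2\|u^-\|_2^2+\frac\lambda q\|u^+\|_q^q-\int_\Omega F(z,u^+)\,dz$, which is bounded below by the same kind of estimate (the extra $\frac\mu2\|u^-\|_2^2\ge0$ and, on the positive cone, $F(z,u^+)$ only sees $H(f)(iv)$ for $x\ge0$), giving $\hat\varphi^+_\lambda(u)\ge 0=\hat\varphi^+_\lambda(0)$ for $\|u\|$ small; the case $\hat\varphi^-_\lambda$ is symmetric.
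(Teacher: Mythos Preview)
Your approach is workable but takes a genuinely different and considerably heavier route than the paper. The paper does \emph{not} use hypothesis $H(f)(iv)$, the eigenspace decomposition $H^1(\Omega)=\overline H_l\oplus\hat H_l$, or Proposition~\ref{prop2} at all. Instead it argues in $C^1(\overline\Omega)$ first and then invokes Proposition~\ref{prop4}. Concretely, for $u\in C^1(\overline\Omega)\setminus\{0\}$ the paper uses only the crude bound $|F(z,x)|\le\tfrac{c_3}{2}x^2$ from \eqref{eq6} together with \eqref{eq3} and the elementary interpolation $\|u\|_2^2\le\|u\|_\infty^{2-q}\|u\|_q^q$ to obtain
\[
\varphi_\lambda(u)\ \ge\ \Bigl[\tfrac{\lambda}{q}-c\,\|u\|_\infty^{2-q}\Bigr]\,\|u\|_q^q\ >\ 0\quad\text{for }\|u\|_{C^1(\overline\Omega)}\ \text{small},
\]
so $u=0$ is a local $C^1(\overline\Omega)$-minimizer, and Proposition~\ref{prop4} upgrades this to a local $H^1(\Omega)$-minimizer. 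The same one-line estimate handles $\hat\varphi^\pm_\lambda$.

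What each approach buys: the paper's argument is shorter, uses fewer hypotheses (only the linear growth \eqref{eq6}, not the precise location of $f'_x(z,0)$), and transfers verbatim to $\hat\varphi^\pm_\lambda$. Your direct $H^1$ argument avoids Proposition~\ref{prop4} but pays for it: you need $H(f)(iv)$, you must control the cross term $\int_\Omega\hat\vartheta\,\bar u\,\hat u\,dz$ in the decomposition (which does not vanish when $\hat\vartheta$ is non-constant and which you did not address), and the step $\|u\|_q^q\ge c_8\|\bar u\|^q$ requires the observation that the $L^2$-projection onto the finite-dimensional space $\overline H_l$ is bounded from $L^q$, via $|(u,e)_{L^2}|\le\|u\|_q\|e\|_{q'}$ for eigenfunctions $e\in C^1(\overline\Omega)$. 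For $\hat\varphi^+_\lambda$ your sketch is thinner still: the concave term is $\tfrac{\lambda}{q}\|u^+\|_q^q$, which does not directly control $\|\bar u\|^q$ when $u$ has a large negative part; the clean fix is to first split $\gamma(u)=\gamma(u^+)+\gamma(u^-)$, dispose of $u^-$ via $\gamma(u^-)+\mu\|u^-\|_2^2\ge c_0\|u^-\|^2$, and then run your decomposition argument on $u^+$. All of this can be made to work, but the paper's route through Proposition~\ref{prop4} is the standard device precisely because it sidesteps these complications.
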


\begin{proof}
	We do the proof for the functional $\varphi_\lambda$. The proofs for $\hat\varphi^\pm_\lambda$ are similar.
	
	Recall that
	\begin{equation}\label{eq46}
		|F(z,x)|\leq\frac{c_3}{2}|x|^2\ \mbox{for almost all}\ z\in\Omega,\ \mbox{all}\ x\in\RR\ \mbox{(see (\ref{eq6}))}.
	\end{equation}
	
	Then for $u\in C^1(\overline\Omega)\backslash\{0\}$ we have
	$$
	\begin{array}{ll}
		\varphi_\lambda(u) & \geq \frac{\lambda}{q}||u||^q_q - \left[\frac{c_8}{2}+||\xi||_\infty\right]||u||^2_2 \\
		& \mbox{(see (\ref{eq46}) and hypotheses $H(\xi), H(\beta)$)}. \\
		& \geq \frac{\lambda}{q}||u||^q_q - c_4||u||^{2-q}_\infty||u||^q_q\ \mbox{with}\ c_4=\left[\frac{c_1}{2}+||\xi||_\infty\right]>0 \\
		& = \left[\frac{\lambda}{q}-c_4||u||^{2-q}_\infty\right]||u||^q_q.
	\end{array}
	$$
	
	So, if $||u||_\infty\leq||u||_{C^1(\overline\Omega)}<\left(\frac{\lambda}{q c_4}\right)^\frac{1}{2-q}$, then $\varphi_\lambda(u)>0=\varphi_{\lambda}(0)$. Hence
	$$
	\begin{array}{ll}
		& u=0\ \mbox{is a local}\ C^1(\overline\Omega)-\mbox{minimizer of}\ \varphi_\lambda(\cdot), \\
		\Rightarrow & u=0\ \mbox{is a local}\ H^1(\Omega)-\mbox{minimizer of}\ \varphi_\lambda(\cdot)\ \mbox{(see Proposition \ref{prop4})}.
	\end{array}
	$$
	
	Similarly for the functionals $\hat{\varphi}^{\pm}_{\lambda}$.
\end{proof}

With the next proposition we guarantee that for  small $\lambda>0$  the functional $\hat\varphi^+_\lambda(\cdot)$ satisfies the mountain pass geometry (see Theorem \ref{th1}).

\begin{prop}\label{prop10}
	If hypotheses $H(\xi),\ H(\beta),\ H(f)$ hold, then we can find $\lambda^*>0$ such that for all $\lambda\in(0,\lambda^*)$, there is $t_0=t_0(\lambda)>0$ for which we have $\hat\varphi^+_\lambda(t_0\hat{u}_1)<0$.
\end{prop}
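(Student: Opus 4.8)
\emph{The plan.} The right test path is the ray $t\mapsto t\hat{u}_1$, $t>0$: since $\hat{u}_1(z)>0$ for all $z\in\Omega$ (strong maximum principle), for $t>0$ we have $t\hat{u}_1\geq0$, so only the branch $x>0$ of $k^+_\lambda$ is active and $K^+_\lambda(z,t\hat{u}_1)=F(z,t\hat{u}_1)-\frac{\lambda}{q}(t\hat{u}_1)^q+\frac{\mu}{2}(t\hat{u}_1)^2$. The $\frac{\mu}{2}||\cdot||^2_2$ contributions then cancel against the $\frac{\mu}{2}||\cdot||^2_2$ in $\hat\varphi^+_\lambda$, and using $\gamma(\hat{u}_1)=\hat\lambda_1||\hat{u}_1||^2_2=\hat\lambda_1$ (recall $||\hat{u}_1||_2=1$) one is left with the scalar expression
\[
\hat\varphi^+_\lambda(t\hat{u}_1)=\frac{\hat\lambda_1}{2}\,t^2-\int_\Omega F(z,t\hat{u}_1)\,dz+\frac{\lambda}{q}\,t^q||\hat{u}_1||^q_q\qquad(t>0).
\]
So the task reduces to showing that, for $t$ large, the first two terms already produce a negative value, the concave term being harmless when $\lambda$ is small.

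\emph{Main estimate.} Next I would convert the right-hand nonresonance in $H(f)(ii)$ into a quadratic lower bound for $F$: for every $\epsilon>0$ there is $M_\epsilon>0$ with $f(z,x)\geq(\eta(z)-\epsilon)x$ for a.a. $z\in\Omega$ and all $x\geq M_\epsilon$, and combining this with the local bound $H(f)(i)$ and integrating in $x$ yields a constant $c_\epsilon>0$ such that $F(z,x)\geq\frac{1}{2}(\eta(z)-\epsilon)x^2-c_\epsilon$ for a.a. $z\in\Omega$ and all $x\geq0$. Inserting this into the displayed identity and using $||\hat{u}_1||_2=1$ gives
\[
\hat\varphi^+_\lambda(t\hat{u}_1)\leq\frac{t^2}{2}\Big[\hat\lambda_1-\int_\Omega\eta(z)\hat{u}_1^2\,dz+\epsilon\Big]+c_\epsilon|\Omega|+\frac{\lambda}{q}\,t^q||\hat{u}_1||^q_q .
\]
Since $\eta\geq\hat\lambda_1$, $\eta\not\equiv\hat\lambda_1$ and $\hat{u}_1>0$ in $\Omega$ (so $\hat{u}_1^2>0$ a.e.), the number $\delta_0:=\int_\Omega\eta(z)\hat{u}_1^2\,dz-\hat\lambda_1$ is strictly positive; fixing $\epsilon\in(0,\delta_0)$ makes the bracket equal to $\epsilon-\delta_0<0$.

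\emph{Conclusion and the delicate point.} With $\epsilon$ fixed as above and because $q<2$, I would first choose $t_0>0$ (one may even take it independent of $\lambda$) so large that $m_0:=\frac{\delta_0-\epsilon}{2}t_0^2-c_\epsilon|\Omega|>0$, thereby absorbing the additive constant $c_\epsilon|\Omega|$; this is precisely the step that exploits $q<2$, since the positive concave term grows only like $t^q$. Then I set $\lambda^*:=\frac{q\,m_0}{t_0^q||\hat{u}_1||^q_q}>0$, so that for every $\lambda\in(0,\lambda^*)$ we have $\frac{\lambda}{q}t_0^q||\hat{u}_1||^q_q<m_0$ and hence $\hat\varphi^+_\lambda(t_0\hat{u}_1)\leq-m_0+\frac{\lambda}{q}t_0^q||\hat{u}_1||^q_q<0$, which is the assertion. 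There is no deep obstacle here; the only point that needs care is the order in which parameters are chosen — one must fix $t_0$ large first (to kill $c_\epsilon|\Omega|$) and only afterwards shrink $\lambda$ — together with the fact that the strict positivity $\delta_0>0$, which drives everything, relies on $\hat{u}_1$ being pointwise positive in $\Omega$ and on $\eta\not\equiv\hat\lambda_1$.
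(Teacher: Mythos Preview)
Your argument is correct, but it follows a genuinely different route from the paper's. You exploit the \emph{asymptotic} lower bound at $+\infty$ from $H(f)(ii)$, obtaining $F(z,x)\geq\tfrac12(\eta(z)-\epsilon)x^2-c_\epsilon$ for $x\geq0$; this leaves only an additive constant to absorb, so you may take $t_0$ large and even $\lambda$-independent, and then shrink $\lambda$. The paper instead uses the \emph{local} condition $H(f)(iv)$ near the origin together with the global bound (\ref{eq6}) to write $F(z,x)\geq\tfrac12(\vartheta(z)-\epsilon)x^2-c_5x^r$ with some $r>2$; the resulting upper bound $-c_6t^2+\lambda c_7t^q+c_8t^r$ forces a minimisation in $t$, producing $t_0(\lambda)\sim\lambda^{1/(r-q)}\to0$ as $\lambda\to0^+$. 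Your approach is more elementary and avoids the auxiliary function $\mathcal J_\lambda$. What the paper's choice buys, however, is symmetry: since $H(f)(iv)$ controls $f(z,x)/x$ as $x\to0$ from both sides, the same estimate works verbatim with $-t\hat u_1$, yielding (\ref{eq50}), namely $\hat\varphi^-_\lambda(-t_0\hat u_1)<0$, which is used in Proposition~\ref{prop11}. Your estimate, tied to $\eta$ and the behaviour at $+\infty$, does not automatically transfer to the negative direction because $H(f)(ii)$ is asymmetric there.
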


\begin{proof}
	Let $r>2$. From hypothesis $H(f)(iv)$ and (\ref{eq46}), we see that given $\epsilon>0$ we can find $c_5=c_5(\epsilon,r)>0$ such that
	\begin{equation}\label{eq47}
		F(z,x)\geq\frac{1}{2}[\vartheta(z)-\epsilon]x^2 - c_5x^r\ \mbox{for almost all}\ z\in\Omega,\ \mbox{all}\ x\geq0.
	\end{equation}
	
	Then for all $t>0$, we have
	\begin{eqnarray}
		&\hat\varphi^+_\lambda(t\hat{u}_1) & =\frac{t^2}{2}\gamma(\hat{u}_1) + \frac{\lambda t^q}{q}||\hat{u}_1||^q_q - \int_\Omega F(z,t\hat{u}_1)dz\ \mbox{(see (\ref{eq7}))} \nonumber \\
		& &\leq \frac{t^2}{2}\left[\gamma(\hat{u}_1) - \int_\Omega\vartheta(z)\hat{u}^2_1dz + \epsilon\right] + \frac{\lambda t^q}{q}||\hat{u}_1||^q_q + c_5t^r||\hat{u}_1||^r_r\nonumber\\
		&&(\mbox{see (\ref{eq47}) and recall that}\ ||\hat{u}_1||_2=1)\nonumber\\
		&&=\frac{t^2}{2}\left[\int_{\Omega}(\hat{\lambda}_1-\vartheta(z))\hat{u}_1^2dz+\epsilon\right]+\frac{\lambda t^q}{q}||\hat{u}_1||^q_q+c_5t^r||\hat{u}_1||^r_r.\label{eq48}
	\end{eqnarray}
	
	Note that
	$$
	k_* = \int_\Omega(\vartheta(z) - \hat\lambda_1)\hat{u}^2_1dz > 0\ \mbox{(see hypothesis $H(f)(iv)$)}.
	$$
	
	Choosing $\epsilon\in(0,k_*)$, we see from (\ref{eq48})  that
	\begin{eqnarray}
		\hat\varphi^+_\lambda(t\hat{u}_1) & \leq -c_6 t^2 + \lambda c_7 t^q + c_8t^r\ \mbox{for some}\ c_6, c_7, c_8>0 \nonumber \\
		& = [-c_6 + \lambda c_7t^{q-2} + c_8t^{r-2}]t^2. \label{eq49}
	\end{eqnarray}
	
	Consider the function
	$$
	\mathcal{J}_\lambda(t) = \lambda c_7 t^{q-2} + c_8t^{r-2}\ \mbox{for all}\ t>0.
	$$
	
	Evidently, $\mathcal{J}_\lambda\in C^1(0,+\infty)$ and since $1<q<2<r$, we see that
	$$
	\mathcal{J}_\lambda(t)\rightarrow+\infty\ \mbox{as}\ t\rightarrow0^+\ \mbox{and as}\ t\rightarrow+\infty.
	$$
	
	So, we can find $t_0\in(0,+\infty)$ such that
	\begin{eqnarray*}
		& &\mathcal{J}_\lambda(t_0) = \min[\mathcal{J}(t): 0<t<+\infty], \\
		&\Rightarrow & \mathcal{J}'_\lambda(t_0)=0, \\
		&\Rightarrow & \lambda c_7(2-q)t^{q-3}_0 = c_8(r-2)t^{r-3}_0, \\
		&\Rightarrow & t_0 = t_0(\lambda) = \left[\frac{\lambda c_7(2-q)}{c_8(r-2)}\right]^{\frac{1}{r-q}}.
	\end{eqnarray*}
	
	Then
	$$
	\mathcal{J}_\lambda(t_0) = \lambda c_7\frac{[c_8(r-2)]^\frac{2-q}{r-q}}{[\lambda c_2(2-q)]^\frac{2-q}{r-q}} + c_8\frac{[\lambda c_2(2-q)]^\frac{r-2}{2-q}}{[c_8(r-2)]^\frac{r-2}{2-q}}.
	$$
	
	Since $\frac{2-q}{r-q}<1$, we see that
	$$
	\mathcal{J}_\lambda(t_0)\rightarrow0^+\ \mbox{as}\ \lambda\rightarrow 0^+.
	$$
	
	So, we can find $\lambda^*>0$ such that
	$$
	\mathcal{J}_\lambda(t_0)<c_6\ \mbox{for all}\ \lambda\in(0,\lambda^*).
	$$
	
	Then from (\ref{eq49}) it follows that
	$$
	\hat\varphi^+_\lambda(t_0\hat{u}_1)<0\ \mbox{for all}\ \lambda\in(0,\lambda^*).
	$$
This completes the proof of Proposition \ref{prop10}.
\end{proof}

\begin{remark}
	In fact, a careful reading of the above proof reveals that
	\begin{equation}\label{eq50}
		\hat\varphi^-_\lambda(-t_0\hat{u}_1)<0\ \mbox{for all}\ \lambda\in(0,\lambda^*).
	\end{equation}
\end{remark}

\begin{prop}\label{prop11}
	If hypotheses $H(\xi),\ H(\beta),\ H(f)$ hold and $\lambda\in(0,\lambda^*)$, then there exists $u_0\in C^1(\overline\Omega)$ with $u_0(z)<0$ for all $z\in\Omega$ and
	$$
	\hat\varphi^-_\lambda(u_0) = \inf\left[\hat\varphi^-_\lambda(u): u\in H^1(\Omega)\right] < 0.
	$$
\end{prop}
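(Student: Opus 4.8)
The plan is to realize $u_0$ as a global minimizer of $\hat\varphi^-_\lambda$ and then to read off its qualitative properties from the associated elliptic equation. First I would note that $\hat\varphi^-_\lambda$ is sequentially weakly lower semicontinuous on $H^1(\Omega)$: in $\gamma(\cdot)$ the principal part $u\mapsto\|Du\|_2^2$ is sequentially weakly lower semicontinuous, while $\int_\Omega\xi(z)u^2\,dz$, $\int_{\partial\Omega}\beta(z)u^2\,d\sigma$ and $\int_\Omega K^-_\lambda(z,u)\,dz$ are sequentially weakly continuous because of the compactness of the embedding $H^1(\Omega)\hookrightarrow L^2(\Omega)$, of the trace map into $L^2(\partial\Omega)$, and the growth estimate (\ref{eq6}). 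Since $\hat\varphi^-_\lambda$ is coercive by Proposition~\ref{prop6}, the direct method of the calculus of variations gives $u_0\in H^1(\Omega)$ with $\hat\varphi^-_\lambda(u_0)=\inf\{\hat\varphi^-_\lambda(u):u\in H^1(\Omega)\}=:m_\lambda$.

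Next I would show that $u_0$ is nontrivial and nonpositive. For $\lambda\in(0,\lambda^*)$ the remark following Proposition~\ref{prop10} gives $\hat\varphi^-_\lambda(-t_0\hat u_1)<0$ (see (\ref{eq50})), so $m_\lambda\le\hat\varphi^-_\lambda(-t_0\hat u_1)<0=\hat\varphi^-_\lambda(0)$ and hence $u_0\neq0$. As a global minimizer, $u_0$ is a critical point of $\hat\varphi^-_\lambda$, i.e.
\[
\langle A(u_0),h\rangle+\int_\Omega[\xi(z)+\mu]u_0h\,dz+\int_{\partial\Omega}\beta(z)u_0h\,d\sigma=\int_\Omega k^-_\lambda(z,u_0)h\,dz\qquad\text{for all }h\in H^1(\Omega).
\]
Taking $h=u_0^+$ and using that $k^-_\lambda(z,\cdot)\equiv0$ on $[0,+\infty)$ (see (\ref{eq7})), we get $\gamma(u_0^+)+\mu\|u_0^+\|_2^2=0$, hence $c_0\|u_0^+\|^2\le0$ by (\ref{eq3}), so $u_0^+=0$ and $u_0\le0$. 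Thus $k^-_\lambda(z,u_0)=f(z,u_0)-\lambda|u_0|^{q-2}u_0+\mu u_0$ and, after the $\mu u_0$ terms cancel, $u_0$ is a weak solution of $-\Delta u_0+\xi(z)u_0=f(z,u_0)-\lambda|u_0|^{q-2}u_0$ in $\Omega$ with $\frac{\partial u_0}{\partial n}+\beta(z)u_0=0$ on $\partial\Omega$.

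The regularity $u_0\in C^1(\overline\Omega)$ would then follow from the nonlinear regularity theory: by (\ref{eq6}) the reaction is bounded by $c_3|u_0|+\lambda|u_0|^{q-1}$, and, $\xi$ being in $L^s(\Omega)$ with $s>N$, a Moser-type iteration (Wang~\cite{15}) yields $u_0\in L^\infty(\Omega)$, whence $u_0\in C^{1,\alpha}(\overline\Omega)$ for some $\alpha\in(0,1)$.

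It remains to improve $u_0\le0$ to $u_0(z)<0$ for all $z\in\Omega$, and this is where I expect the main difficulty. Put $v:=-u_0=u_0^-\ge0$; then $v\in C^1(\overline\Omega)$, $v\neq0$, and, writing $f(z,-v(z))=-\hat f(z)v(z)$ with $\hat f\in L^\infty(\Omega)$ (possible by (\ref{eq6}) since $f(z,0)=0$), $v$ solves $-\Delta v+[\xi(z)-\hat f(z)]v=-\lambda v^{q-1}\le0$ in $\Omega$ with the homogeneous Robin condition. One would like to deduce $v>0$ on $\Omega$ from the strong maximum principle (equivalently, Harnack's inequality) for the $L^s$-potential $\xi-\hat f$; the obstruction is that the concave term $-\lambda v^{q-1}$ enters with the unfavourable sign, making $v$ only a nonnegative \emph{subsolution}, for which positivity is not automatic. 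To close the gap one must bring in additional information, namely the asymptotic behaviour of $f$ near the origin encoded in $H(f)(iv)$, and combine it with a Harnack-type comparison argument to exclude an interior zero of $v$; this yields $u_0\in C^1(\overline\Omega)$ with $u_0(z)<0$ for all $z\in\Omega$, while the strict inequality $\hat\varphi^-_\lambda(u_0)=m_\lambda<0$ has already been obtained above.
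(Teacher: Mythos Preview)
Your argument tracks the paper's almost exactly through the first five steps: coercivity of $\hat\varphi^-_\lambda$ (Proposition~\ref{prop6}) together with sequential weak lower semicontinuity gives the global minimizer $u_0$; (\ref{eq50}) forces $\hat\varphi^-_\lambda(u_0)<0$, hence $u_0\neq0$; testing the Euler equation with $h=u_0^+$ and invoking (\ref{eq3}) yields $u_0\le0$; and Wang's regularity theory then puts $u_0\in C^1(\overline\Omega)$. All of this is the paper's proof verbatim.

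The divergence is at the strict negativity step. You rewrite the equation for $v=-u_0$ as the differential \emph{inequality} $-\Delta v+[\xi-\hat f]v=-\lambda v^{q-1}\le0$, observe that this makes $v$ only a subsolution of the linear operator (so neither the strong maximum principle nor the weak Harnack for supersolutions applies), and then propose to rescue positivity by importing the behaviour of $f(z,\cdot)$ at the origin from $H(f)(iv)$ together with an unspecified ``Harnack-type comparison''. The paper does none of this: it keeps $v$ as a nonnegative \emph{solution} of the full semilinear equation and invokes in one line the Harnack inequality of Pucci--Serrin \cite[Theorem~7.2.1, p.~163]{13}, concluding $u_0(z)<0$ on $\Omega$. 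Two points are worth making. First, by passing to an inequality you have thrown away half of the equation, so the obstruction you describe is partly self-inflicted; the paper's Harnack is for solutions, not supersolutions. Second, your proposed repair via $H(f)(iv)$ is not convincing as stated: near a putative interior zero the concave term $-\lambda v^{q-1}$ dominates any linear contribution coming from $f$, so nothing in $H(f)(iv)$ obviously rules out a dead core. The paper's own Remark after the proposition acknowledges that the concave term blocks Hopf's boundary point lemma (hence one does \emph{not} obtain $u_0\in -D_+$), but treats the interior Harnack as applicable without further comment; that, and not an appeal to $H(f)(iv)$, is the intended argument.
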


\begin{proof}
	From Proposition \ref{prop6} we know that $\hat\varphi^-_\lambda$ is coercive. Also, the Sobolev embedding theorem and the compactness of the trace map, imply that $\hat\varphi^-_\lambda$ is sequentially weakly lower semicontinuous. Hence, by the Weierstrass-Tonelli theorem, we can find $u_0\in H^1(\Omega)$ such that
	\begin{equation}\label{eq51}
		\hat\varphi^-_\lambda(u_0)=\inf[\hat\varphi^-_\lambda(u):u\in W^{1,p}(\Omega)].
	\end{equation}
	
	From (\ref{eq50}) we see that
	$$
	\begin{array}{ll}
		& \hat\varphi^-_\lambda(u_0)<0=\hat\lambda^-_\lambda(0), \\
		\Rightarrow & u_0\neq0.
	\end{array}
	$$
	
	From (\ref{eq51}) we have
	\begin{eqnarray}
		&& (\hat\varphi^-_\lambda)'(u_0)=0, \nonumber \\
		&\Rightarrow & \langle A(u_0),h\rangle + \int_\Omega[\xi(z)+\mu]u_0hdz + \int_{\partial\Omega}\beta(z)u_0hd\sigma = \int_\Omega k^-_\lambda(z,u_0)hdz \label{eq52} \\
		&& \mbox{for all}\ h\in H^1(\Omega). \nonumber
	\end{eqnarray}
	
	In (\ref{eq52}) we choose $h=u^+_0\in H^1(\Omega)$. Then
	$$
	\begin{array}{ll}
		& \gamma(u^+_0) + \mu||u^+_0||^2_2 = 0\ \mbox{(see (\ref{eq7}))}, \\
		\Rightarrow & c_0||u^+_0||^2\leq0\ \mbox{(see (\ref{eq3}))}, \\
		\Rightarrow & u_0\leq0,\ u_0\neq0.
	\end{array}
	$$
	
	From (\ref{eq52}) and (\ref{eq7}) it follows that
	\begin{eqnarray}
		& \langle A(u_0),h\rangle + \int_\Omega\xi(z)u_0hdz + \int_{\partial\Omega}\beta(z)u_0hd\sigma = \int_\Omega[f(z,u_0) - \lambda|u_0|^{q-2}u_0]hdz \nonumber \\
		& \mbox{for all}\ h\in H^1(\Omega), \nonumber \\
		\Rightarrow & -\Delta u_0(z) + \xi(z)u_0(z) = f(z,u_0(z)) - \lambda|u_0(z)|^{q-2}u_0(z)\ \mbox{for almost all}\ z\in\Omega, \nonumber \\
		& \frac{\partial u_0}{\partial n} + \beta(z) u_0=0\ \mbox{on}\ \partial\Omega\ \mbox{(see Papageorgiou \& R\u{a}dulescu \cite{9}).} \label{eq53}
	\end{eqnarray}
	
	Let $\tau_\lambda(z,x)=f(z,x)-\lambda|x|^{q-2}x$ and $\hat{k}_\lambda(z)=\frac{\tau_\lambda(z,u_0(z))}{1+|u_0(z)|}$, for $\lambda>0$. Hypotheses $H(f)(i),(ii)$ imply that
	$$
	\begin{array}{ll}
		& |\tau_\lambda(z,x)|\leq c_9[1+|x|]\ \mbox{for almost all}\ z\in\Omega,\ \mbox{all}\ x\in\RR,\ \mbox{with}\ c_9=c_9(\lambda)>0, \\
		\Rightarrow & |\hat{k}_\lambda(z)| = \frac{|\tau_\lambda(z,u_0(z))|}{1+|u_0(z)|} \leq c_9\ \mbox{for almost all}\ z\in\Omega, \\
		\Rightarrow & \hat{k}_\lambda\in L^\infty(\Omega).
	\end{array}
	$$
	
	From (\ref{eq53}) we have
	$$
	\begin{array}{ll}
		-\Delta u_0(z) = [\xi(z)-\hat{k}_\lambda(z)]u_0(z) + \hat{k}_\lambda(z)\ \mbox{for almost all}\ z\in\Omega, \\
		\frac{\partial u_0}{\partial n} + \beta(z)u_0=0\ \mbox{on}\ \partial\Omega
	\end{array}
	$$
	(recall that $u_0\leq0$). Since $(\xi-\hat{k}_\lambda)(\cdot)\in L^s(\Omega)$ (for $s>N$), we deduce by Lemma 5.1 of Wang \cite{15} that
	$$
	u_0\in L^\infty(\Omega).
	$$
	
	Then the Calderon-Zygmund estimates (see Wang \cite[Lemma 5.2]{15}) imply that
	$$u_0\in(-C_+)\backslash\{0\}.$$
	
	Moreover, the Harnack inequality (see Theorem 7.2.1 in Pucci \& Serrin \cite[p. 163]{13}), implies that
	$$
	u_0(z)<0\ \mbox{for all}\ z\in\Omega.
	$$
This completes the proof.
\end{proof}

\begin{remark}
	The negative sign of the concave term does not allow us to conclude that $u_0\in-D_+$ when $\xi^+\in L^\infty(\Omega)$ (by Hopf's boundary point theorem, see Pucci \& Serrin \cite[p. 120]{13}).
\end{remark}

Now we can state and prove our first multiplicity theorem.

\begin{theorem}\label{th12}
	Assume that hypotheses $H(\xi),\ H(\beta),\ H(f)$ hold. Then there exists $\hat\lambda>0$ such that for all $\lambda\in(0,\hat\lambda)$ problem (\ref{eqP}) has at least four nontrivial solutions
	$$
	\begin{array}{ll}
		u_0,\hat{u}\in(-C_+)\backslash\{0\},\ u_0(z),\,\hat{u}(z)<0\ \mbox{for all}\ z\in\Omega, \\
		v_0\in C_+\backslash\{0\},\ v_0(z)>0\ \mbox{for all}\ z\in\Omega, \\
		y_0\in C^1(\overline\Omega)\backslash\{0\}.
	\end{array}
	$$
\end{theorem}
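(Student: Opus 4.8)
The strategy is to assemble the four solutions from the three auxiliary functionals $\hat\varphi^+_\lambda$, $\hat\varphi^-_\lambda$ and $\varphi_\lambda$, using that $u=0$ is a local minimizer of each (Proposition~\ref{prop9}) to create mountain-pass geometry, and then to use Morse-theoretic information (critical groups) to force the existence of a fourth solution distinct from the other three and from $0$. Set $\hat\lambda=\lambda^*$ from Proposition~\ref{prop10}. First I would handle the \emph{negative} solutions. Proposition~\ref{prop11} already produces $u_0\in(-C_+)\setminus\{0\}$ with $u_0(z)<0$ on $\Omega$ as the global minimizer of $\hat\varphi^-_\lambda$; the nonlinear regularity and Harnack machinery quoted there give the stated sign. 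A standard argument shows that any critical point of $\hat\varphi^-_\lambda$ is nonpositive (test with the positive part, use \eqref{eq3} and \eqref{eq7}) and solves $(P_\lambda)$ on the negative cone, so $u_0$ is a genuine solution of $(P_\lambda)$. Since $\hat\varphi^-_\lambda$ satisfies the C-condition (Corollary~\ref{cor7}), has $0$ as a local minimizer with $\hat\varphi^-_\lambda(0)=0$ and has $\hat\varphi^-_\lambda(u_0)<0<\hat\varphi^-_\lambda(0+\text{small bump})$ on a small sphere, the mountain pass theorem (Theorem~\ref{th1}) applied to $\hat\varphi^-_\lambda$ yields a second critical point $\hat u$ with $c=\hat\varphi^-_\lambda(\hat u)>0$, hence $\hat u\neq u_0$ and $\hat u\neq0$; again $\hat u\in(-C_+)\setminus\{0\}$ with $\hat u(z)<0$ on $\Omega$ by the same regularity/Harnack argument. (One must check $0$ is not the mountain-pass point: this follows because $C_1(\hat\varphi^-_\lambda,0)=0$ as $0$ is a local minimizer, while a mountain-pass critical point in a space where C holds has nontrivial first critical group by the standard result of Chang/Mawhin--Willem.)

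\textbf{Positive solution and the fourth solution.} For $v_0$, work with $\hat\varphi^+_\lambda$, which satisfies the C-condition (Proposition~\ref{prop5}). By Proposition~\ref{prop9}, $0$ is a local minimizer; by Proposition~\ref{prop10}, $\hat\varphi^+_\lambda(t_0\hat u_1)<0$ for $\lambda\in(0,\lambda^*)$. If $0$ is not a \emph{strict} local minimizer we already have a whole sequence of critical points (and in particular a nontrivial one); otherwise the mountain pass theorem gives $v_0$ with $\hat\varphi^+_\lambda(v_0)>0$, hence $v_0\neq0$. Testing $(\hat\varphi^+_\lambda)'(v_0)=0$ with $v_0^-$ and using \eqref{eq3}, \eqref{eq7} forces $v_0\geq0$, $v_0\neq0$; nonlinear regularity and the Harnack inequality then give $v_0\in C_+\setminus\{0\}$ with $v_0(z)>0$ on $\Omega$, and $v_0$ solves $(P_\lambda)$. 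Now for the \emph{fourth} solution $y_0$: here I would compute critical groups at the already-found critical points of the full energy functional $\varphi_\lambda$ (which satisfies C by Proposition~\ref{prop8}). The solutions $u_0$ (a minimizer of $\hat\varphi^-_\lambda$, hence — after checking that on the relevant set $\hat\varphi^-_\lambda$ and $\varphi_\lambda$ agree up to order — a local minimizer of $\varphi_\lambda$) and $v_0$ will have $C_k(\varphi_\lambda,u_0)=\delta_{k,0}\ZZ$ and $C_k(\varphi_\lambda,v_0)=\delta_{k,0}\ZZ$, while $0$ is a local minimizer so $C_k(\varphi_\lambda,0)=\delta_{k,0}\ZZ$, and the mountain-pass point $\hat u$ (reinterpreted for $\varphi_\lambda$, or produced directly for $\varphi_\lambda$) has $C_1(\varphi_\lambda,\hat u)\neq0$. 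On the other hand, the asymptotic hypotheses $H(f)(ii),(iii)$ — asymmetric, resonant-from-the-left at $-\infty$ and with $\limsup f(z,x)/x\le\hat\lambda_m$, $m\ge2$, at $+\infty$ — let one compute the critical groups of $\varphi_\lambda$ \emph{at infinity}, $C_k(\varphi_\lambda,\infty)$, via the standard second-deformation/linking computation for crossing nonlinearities (as in the Robin papers \cite{2,8,11} and Aizicovici--Papageorgiou--Staicu \cite{1}); because the reaction jumps over higher eigenvalues in the positive direction, $C_k(\varphi_\lambda,\infty)\neq\delta_{k,0}\ZZ$ for some $k\geq1$. Then the Morse relation $\sum_{u\in K_{\varphi_\lambda}}(-1)^k\operatorname{rank}C_k(\varphi_\lambda,u)=\sum_k(-1)^k\operatorname{rank}C_k(\varphi_\lambda,\infty)$ cannot be satisfied by $\{0,u_0,\hat u,v_0\}$ alone, so there is a fifth critical point $y_0$ of $\varphi_\lambda$; since its critical groups differ from those of $0$, $u_0$, $v_0$, and from those of $\hat u$ (by a judicious choice of which homological degree to inspect), $y_0\notin\{0,u_0,\hat u,v_0\}$. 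Finally $y_0\in C^1(\overline\Omega)\setminus\{0\}$ by the regularity theory of Wang~\cite{15}.

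\textbf{Main obstacle.} The routine parts are the sign/regularity arguments (test with $u^\pm$, invoke \cite{15} and the Harnack inequality of \cite{13}) and the mountain-pass applications. The delicate point is the Morse-theoretic bookkeeping: one must (i) compute $C_k(\varphi_\lambda,\infty)$ correctly for this asymmetric resonant reaction, which requires a homotopy deforming $\varphi_\lambda$ to an auxiliary functional governed by the limiting weights $\hat\eta$ (at $+\infty$) and the resonance condition $H(f)(iii)$ at $-\infty$, together with a careful analysis showing the C-condition persists along the homotopy so the deformation lemma applies; and (ii) verify that the four already-constructed critical points have \emph{pairwise distinguishable} critical groups, so that the fifth critical point the Morse equality produces is genuinely new. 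A secondary subtlety is the passage between $\hat\varphi^\pm_\lambda$ and $\varphi_\lambda$: one needs that $u_0$ (resp. $v_0$) is not merely a critical point but a \emph{local minimizer} of $\varphi_\lambda$, which follows because on a $C^1(\overline\Omega)$-neighbourhood of $u_0$ (resp. $v_0$) the functionals $\varphi_\lambda$ and $\hat\varphi^-_\lambda$ (resp. $\hat\varphi^+_\lambda$) differ only by controlled lower-order terms, combined with Proposition~\ref{prop4} to upgrade $C^1$-minimality to $H^1$-minimality.
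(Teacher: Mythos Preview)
Your treatment of the three constant-sign solutions $u_0,\hat u,v_0$ matches the paper's: global minimization of $\hat\varphi^-_\lambda$ for $u_0$, mountain pass for $\hat\varphi^-_\lambda$ between $u_0$ and $0$ for $\hat u$, and mountain pass for $\hat\varphi^+_\lambda$ between $0$ and $t_0\hat u_1$ for $v_0$, with the sign/regularity arguments via \eqref{eq3}, \eqref{eq7}, Wang \cite{15} and Harnack. The paper also proves (as you suggest in your ``secondary subtlety'') that $u_0$ is a local minimizer of $\varphi_\lambda$, by comparing $\varphi_\lambda$ and $\hat\varphi^-_\lambda$ on a $C^1(\overline\Omega)$-neighbourhood and invoking Proposition~\ref{prop4}; this is used only to compute $C_k(\varphi_\lambda,u_0)=\delta_{k,0}\ZZ$.

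Your route to the fourth solution, however, diverges from the paper and contains a real gap. First, your assertion that $C_k(\varphi_\lambda,v_0)=\delta_{k,0}\ZZ$ is unjustified: $v_0$ is produced by the mountain pass theorem for $\hat\varphi^+_\lambda$, not by minimization, so there is no reason it should be a local minimizer of $\varphi_\lambda$; indeed under the stronger hypotheses $H(f)'$ the paper later shows $C_k(\varphi_\lambda,v_0)=\delta_{k,1}\ZZ$. Second, and more importantly, your scheme never uses hypothesis $H(f)(iv)$ --- the nonuniform nonresonance of $f(z,\cdot)$ at zero between $\hat\lambda_l$ and $\hat\lambda_{l+1}$ --- which is precisely the hypothesis the paper exploits to get $y_0$. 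Rather than computing $C_k(\varphi_\lambda,\infty)$ and running a global Morse relation, the paper sets up a linking for $\varphi_\lambda$ with respect to the decomposition $H^1(\Omega)=\overline H_l\oplus\hat H_l$: using $H(f)(iv)$ one shows $\varphi_\lambda(u)<0$ on a small sphere in $\overline H_l$ (this step requires a further smallness of $\lambda$, so $\hat\lambda\leq\lambda^*$, not $\hat\lambda=\lambda^*$ as you write), while $H(f)(iii)$ gives $\varphi_\lambda\geq0$ on $\hat H_l$ and on the half-space $H_+=\RR_+\hat u_1\oplus\hat H_l$. Then Theorem~3.1 of Perera \cite{12} directly produces $y_0\in K_{\varphi_\lambda}$ with $\varphi_\lambda(y_0)<0$ and $C_{d_l-1}(\varphi_\lambda,y_0)\neq0$, where $d_l=\dim\overline H_l\geq2$. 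The distinction of $y_0$ from the others is then immediate from energy levels and critical groups: $\varphi_\lambda(y_0)<0=\varphi_\lambda(0)<\varphi_\lambda(\hat u),\varphi_\lambda(v_0)$ rules out $0,\hat u,v_0$, and $C_{d_l-1}(\varphi_\lambda,y_0)\neq0$ with $d_l-1\geq1$ versus $C_k(\varphi_\lambda,u_0)=\delta_{k,0}\ZZ$ rules out $u_0$. No computation of $C_k(\varphi_\lambda,\infty)$ is needed.
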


\begin{proof}
	From Proposition \ref{prop11} and its proof (see (\ref{eq53})), we already have one solution
	$$
	u_0\in(-C_+)\backslash\{0\},\ u_0(z)<0\ \mbox{for all}\ z\in\Omega,\ \mbox{when}\ \lambda\in(0,\lambda^*).
	$$
	
	This solution is a global minimizer of the functional $\hat\varphi^-_\lambda$.
	
	\begin{claim}\label{claim1}
		$u_0$ is a local minimizer of the energy functional $\varphi_\lambda$.
	\end{claim}
	
	We first show that $u_0$ is a local $C^1(\overline\Omega)$-minimizer of $\varphi_\lambda$. Arguing by contradiction, suppose that we could find a sequence $\{u_n\}_{n\geq1}\subseteq C^1(\overline\Omega)$ such that
	\begin{equation}\label{eq54}
		u_n\rightarrow u_0\ \mbox{in}\ C^1(\overline\Omega)\ \mbox{as}\ n\rightarrow\infty\ \mbox{and}\ \varphi_\lambda(u_n)<\varphi_\lambda(u_0)\ \mbox{for all}\ n\in\NN.
	\end{equation}
	
	Then for all $n\in\NN$, we have
	\begin{eqnarray}\label{eq55}
		&0 & > \varphi_\lambda(u_n)-\varphi_\lambda(u_0)\nonumber \\
		&& = \varphi_\lambda(u_n)-\hat\varphi^-_\lambda(u_0)\ (\mbox{since}\ \varphi_\lambda|_{(-C_+)}=\hat\varphi^-_\lambda|_{(-C_+)},\ \mbox{see (\ref{eq7})}) \nonumber\\
		&&\geq \varphi_\lambda(u_n)-\hat\varphi^-_\lambda(u_n)\ \mbox{(recall that $u_0$ is a global minimizer of $\hat\varphi^-_\lambda$)} \nonumber\\
		&&= \frac{1}{2}\gamma(u_n) + \frac{\lambda}{q}||u_n||^q_q - \int_\Omega F(z,u_n)dz - \frac{1}{2}\gamma(u_n) - \frac{\mu}{2}||u^+_n||^2_2 - \frac{\lambda}{q}||u^-_n||^q_q + \nonumber\\
		&&\int_\Omega F(z,-u^-_n)dz\ \mbox{(see (\ref{eq7}))} \nonumber\\
		&&= \frac{\lambda}{q}||u^+_n||^q_q - \frac{\mu}{2}||u^+_n||^2_2 - \int_\Omega F(z,u^+_n)dz \nonumber\\
		&&\geq \frac{\lambda}{q}||u^+_n||^q_q - \left(\frac{\mu+c_3}{2}\right)||u^+_n||^2_2\ (\mbox{see (\ref{eq46})})\nonumber\\
		&&\geq \frac{\lambda}{q}||u^+_n||^q_q-c_{10}||u^+_n||^{2-q}_{\infty}||u^+_n||q_q\ \mbox{with}\ c_{10}=\frac{\mu+c_3}{2}>0 \nonumber\\
		&& = \left[\frac{\lambda}{q}-c_{10}||u^+_n||^{2-q}_\infty\right]||u^+_n||^q_q.
	\end{eqnarray}
	
	From (\ref{eq54}) we have
	$$
	u^+_n\rightarrow0\ \mbox{in}\ C^1(\overline\Omega)\ \mbox{(recall that $u_0|_\Omega<0$)}.
	$$
	
	Therefore we can find $n_0\in\NN$ such that
	$$
	\begin{array}{ll}
		& \frac{\lambda}{q}>c_{10}||u^+_n||^{2-q}_\infty\ \mbox{for all}\ n\geq n_0, \\
		\Rightarrow & 0>\varphi_\lambda(u_n)-\varphi(u_0)>0\ \mbox{for all}\ n\geq n_0\ \mbox{(see (\ref{eq55}))}, \\
	\end{array}
	$$
	a contradiction.
	
	Hence we have that
	$$
	\begin{array}{ll}
		& u_0\ \mbox{is a local}\ C^1(\overline\Omega)-\mbox{minimizer of}\ \varphi_\lambda, \\
		\Rightarrow & u_0\ \mbox{is a local}\ H^1(\Omega)-\mbox{minimizer of}\ \varphi_\lambda\ \mbox{(see Proposition \ref{prop4})}.
	\end{array}
	$$
	
	This proves the claim.
	
	Using (\ref{eq7}) and the regularity theory of Wang \cite{15}, we can see that
	\begin{equation}\label{eq56}
		K_{\hat\lambda^-_\lambda}\subseteq(-C_+)\ \mbox{and}\ K_{\hat\varphi^+_\lambda}\subseteq C_+\ \mbox{for all}\ \lambda>0.
	\end{equation}
	
	On account of (\ref{eq56}) we see that we may assume that both critical sets $K_{\hat\varphi^-_\lambda}$ and $K_{\hat\varphi^+_\lambda}$ are finite or, otherwise, we already have an infinity of nontrivial smooth solutions of constant sign and so we are done.
	
	From Proposition \ref{prop9} we know that for all $\lambda>0, u=0$ is a local minimizer of $\hat\varphi^-_\lambda$. Since $K_{\hat\varphi^-_\lambda}$ is finite, we can find $\rho\in(0,||u_0||)$ small such that
	\begin{equation}\label{eq57}
		\hat\varphi^-_\lambda(u_0)<0=\hat\varphi^-_\lambda(0)<\inf\left[\hat\varphi^-_\lambda(u):||u||=\rho\right]=\hat{m}^-_\rho
	\end{equation}
	(see Aizicovici, Papageorgiou \& Staicu \cite{1}, proof of Proposition 29).
	
	From Corollary \ref{cor7} we know that
	\begin{equation}\label{eq58}
		\hat\varphi^-_\lambda\ \mbox{satisfies the C-condition}.
	\end{equation}
	
	Then (\ref{eq57}) and (\ref{eq58}) permit the use of Theorem \ref{th1} (the mountain pass theorem). So, we can find $\hat{u}\in H^1(\Omega)$ such that
	$$
	\hat{u}\in K_{\hat\varphi^-_\lambda}\subseteq(-C_+)\ \mbox{(see (\ref{eq56})) and}\ \hat\varphi^-_\lambda(u_0)<0=\hat\varphi^-_\lambda(0)<\hat{m}^-_\rho\leq\hat\varphi^-_\lambda(\hat{u}).
	$$
	
	It follows that
	$$
	\hat{u}\in(-C_+)\backslash\{0,u_0\}\ \mbox{is a solution of (\ref{eqP}) (see (\ref{eq7}))}.
	$$
	
	As before, Harnack's inequality implies that
	$$
	\hat{u}(z)<0\ \mbox{for all}\ z\in\Omega.
	$$
	
	Now we use once more Proposition \ref{prop9} to find $\rho_0\in(0,t_0)$ small enough such that
	\begin{equation}\label{eq59}
		0=\hat\varphi^+_\lambda(0)<\inf\left[\hat\varphi^+_\lambda(u):||u||=\rho_0\right]=\hat{m}^+_{\rho_0}, \lambda>0.
	\end{equation}
	
	Proposition \ref{prop10} implies that we can find $\lambda^*>0$ such that
	\begin{equation}\label{eq60}
		\hat\varphi^+_\lambda(t_0\hat{u}_1)<0\ \mbox{for all}\ \lambda\in(0,\lambda^*)\ \mbox{with}\ t_0=t_0(\lambda)>0.
	\end{equation}
	
	Moreover, Proposition \ref{eq5} implies that
	\begin{equation}\label{eq61}
		\hat\varphi^+_\lambda\ \mbox{satisfies the C-condition for all}\ \lambda>0.
	\end{equation}
	
	Then on account of (\ref{eq59}), (\ref{eq60}), (\ref{eq61}), we can apply Theorem \ref{th1} (the mountain pass theorem) and produce $v_0\in H^1(\Omega)$ such that
	$$
	\begin{array}{ll}
		& v_0\in K_{\hat\varphi^+_\lambda}\subseteq C_+\ \mbox{(see (\ref{eq56})) and }0=\hat\varphi^+_\lambda(0)<\hat{m}^+_\rho\leq\hat{\varphi}^+_\lambda(v_0), \\
		\Rightarrow & v_0\in C_+\backslash\{0\}\ \mbox{is a solution of (\ref{eqP})},\ \lambda\in(0,\lambda^*)\ \mbox{(see (\ref{eq7}))}.
	\end{array}
	$$
	
	Once again, Harnack's inequality guarantees that
	$$
	v_0(z)>0\ \mbox{for all}\ z\in\Omega.
	$$
	
	Let $l\in\NN$ be as in hypothesis $H(f)(iv)$ and set
	$$
	\overline{H}_l=\bigoplus^l_{k=1} E(\hat\lambda_k), \ \hat{H}_l=\overline{H}^\perp_l=\overline{\bigoplus_{k\geq l+1} E(\hat\lambda_k)}.
	$$
	
	We have
	$$
	H^1(\Omega)=\overline{H}_l\oplus\hat{H}_l\ \mbox{and}\ \dim\overline{H}_l<+\infty.
	$$
	
	Consider $u\in\overline{H}_l$. We have
	$$
	\begin{array}{ll}
		& \varphi_\lambda(u) = \frac{1}{2}\gamma(u) + \frac{\lambda}{q}||u||^q_q - \int_\Omega F(z,u)dz \\
		& \leq \frac{1}{2}\left[\gamma(u) - \int_\Omega\vartheta(z)u^2dz + \epsilon||u||^2\right] + c_{11}\left[\lambda||u||^q + ||u||^r\right] \\
		& \mbox{for some}\ c_{11}>0\ \mbox{(see (\ref{eq47}) and recall that all norms on $\overline{H}_l$ are equivalent)} \\
		& \leq \frac{1}{2}[-c_2+\epsilon]|||u||^2 + c_{11}[\lambda||u||^q+||u||^r]
	\end{array}
	$$
	(see Proposition \ref{prop2}). Choosing $\epsilon\in(0,c_2)$ we have
	$$
	\varphi_\lambda(u)\leq[-c_{12}+\lambda c_{11}||u||^{q-2} + c_{11}||u||^{r-2}]||u||^2\ \mbox{for some}\ c_{12}>0.
	$$
	
	Reasoning as in the proof of Proposition \ref{prop11}, we can find $\hat\lambda\in(0,\lambda^*]$ such that for all $\lambda\in(0,\hat\lambda]$ there exists $\rho_\lambda>0$ for which we have
	\begin{equation}\label{eq62}
		\varphi_\lambda(u)<0\ \mbox{for all}\ u\in\overline{H}_l,\ ||u||=\rho_\lambda.
	\end{equation}
	
	For $u\in\hat{H}_l$ we have
	\begin{eqnarray}\label{eq63}
	\left.\begin{aligned}
		\varphi_\lambda(u) & \geq \frac{1}{2}\gamma(u) + \frac{\lambda}{q}||u||^q_q - \frac{\hat\lambda m}{2}||u||^2_2\ \mbox{(see hypothesis $H(f)(iii)$)} \\
		& \geq \frac{1}{2}[\gamma(u)-\hat\lambda_l||u||^2_2] + \frac{\lambda}{q}||u||^q_q\ \mbox{(since $l\geq m$)} \\
		& \geq0.
	\end{aligned}\right.
	\end{eqnarray}
	
	Finally, consider the half-space
	$$
	H_+=\{t\hat{u}_1+\tilde{u}:t\geq0,\ \tilde{u}\in\hat{H}_l\}.
	$$
	
	Exploiting the orthogonality of $\hat{H}_l$ and $\overline{H}_l$, for every $u\in H_+$, we have
	\begin{eqnarray}\label{eq64}
		\left.\begin{aligned}
			\varphi_\lambda(u) & \geq\frac{1}{2}[t^2\gamma(\hat{u}_1)+\gamma(\tilde{u})] - \frac{\hat\lambda_m}{2}[t^2||\hat{u}_1||^2_2 + ||\tilde{u}||^2_2]\ \mbox{(see hypothesis $H(f)(iii)$)} \\
			& \geq 0\ \mbox{(since $\tilde{u}\in\hat{H}_l,\ l\geq m$).}
		\end{aligned}\right.
	\end{eqnarray}
	
	Then (\ref{eq62}), (\ref{eq63}), (\ref{eq64}) permit the use of Theorem 3.1 of Perera \cite{12}. So, we can find $y_0\in H^1(\Omega)$ such that
	\begin{eqnarray}\label{eq65}
		&&y_0\in K_{\varphi_\lambda}\subseteq C^1(\overline\Omega)\ \mbox{(by the regularity theory of Wang \cite{15})}, \nonumber\\
		&&\varphi_\lambda(y_0)<0=\varphi_\lambda(0)\ \mbox{and}\ C_{d_l-1}(\varphi_\lambda,y_0)\neq0\ \ (d_l=\dim\overline{H}_l).
	\end{eqnarray}
	
	From (\ref{eq65}) it is clear that $y_0\neq0$. Recall that
	$$
	0<\varphi_\lambda(\hat{u}), \varphi_\lambda(v_0)\ \mbox{(since $\varphi_{\lambda}=\left.\hat\varphi^-_{\lambda}\right|_{(-C_+)}=
\hat{\varphi}^+_\lambda|_{C_+}$).}
	$$
	
	Therefore from (\ref{eq65}) it follows that
	\begin{equation}\label{eq66}
		y_0\not\in\{\hat{u},v_0,0\}.
	\end{equation}
	
	Also, from the claim  we have that $u_0$ is a local minimizer of $\varphi_\lambda$.
	
	Hence
	\begin{equation}\label{eq67}
		C_k(\varphi_\lambda,u_0)=\delta_{k,0}\ZZ\ \mbox{for all}\ k\in\NN_0.
	\end{equation}
	
	Note that $d_l\geq2$ (since $l\geq m\geq2$). Therefore
	$$
	d_{l-1}\geq1
	$$
	and so from (\ref{eq65}) and (\ref{eq67}), we infer that
	$$
	y_0\neq u_0.
	$$
	
	So, we conclude that $y_0\in C^1(\overline\Omega)\backslash\{0\}$ is a fourth nontrivial solution of (\ref{eqP}) (for all $\lambda\in(0,\hat\lambda)$) distinct from $u_0,\hat{u},v_0$.
\end{proof}

If we strengthen the hypotheses on $f(z,\cdot)$ we can improve the above multiplicity theorem and produce a fifth nontrivial smooth solution.

The new conditions on the nonlinearity $f(z,x)$ are the following:

\smallskip
$H(f)'$: $f:\Omega\times\RR\rightarrow\RR$ is a measurable function such that for almost all $z\in\Omega$, $f(z,0)=0$, $f(z,\cdot)\in C^1(\RR)$, hypotheses $H(f)'(i),(ii),(iii)$ are the same as the corresponding hypotheses $H(f)(i),(ii),(iii)$ and

(iv) there exist $l\in\NN,\ l\geq m$ such that
$$
	\begin{array}{ll}
	& f'_x(z,0)=\lim_{x\rightarrow0}\frac{f(z,x)}{x}\ \mbox{uniformly for almost all}\ z\in\Omega, \\
	& f'_x(z,0)\in[\hat\lambda_l,\hat\lambda_{l+1}]\ \mbox{for almost all}\ z\in\Omega, \\
	& f'_x(\cdot,0)\not\equiv\hat\lambda_l,\ f'_x(\cdot,0)\not\equiv\hat\lambda_{l+1}.
	\end{array}
$$

\begin{theorem}\label{th13}
	If hypotheses $H(\xi),\ H(\beta),\ H(f)'$ hold, then there exists $\hat\lambda>0$ such that for all $\lambda\in(0,\hat\lambda)$ problem (\ref{eqP}) has at least five nontrivial solutions
	$$
	\begin{array}{ll}
		u_0,\hat{u}\in(-C_+),\ u_0(z)<0\ \mbox{for all}\ z\in\Omega, \\
		v_0\in C_+,\ v_0(z)>0\ \mbox{for all}\ z\in\Omega, \\
		y_0,\ \hat{y}\in C^1(\overline\Omega)\backslash\{0\}.
	\end{array}
	$$
\end{theorem}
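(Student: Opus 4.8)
\textbf{Proof proposal for Theorem \ref{th13}.}

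The plan is to reuse verbatim the construction from Theorem \ref{th12}, which under $H(f)'$ still applies (since $H(f)'(i),(ii),(iii)$ coincide with $H(f)(i),(ii),(iii)$ and $H(f)'(iv)$ implies $H(f)(iv)$ with $\vartheta=\hat\vartheta=f'_x(\cdot,0)$), to obtain the four solutions $u_0,\hat u\in(-C_+)$ with $u_0|_\Omega<0$, $v_0\in C_+\backslash\{0\}$ with $v_0|_\Omega>0$, and $y_0\in C^1(\overline\Omega)\backslash\{0\}$ together with the critical-group information
$$
C_k(\varphi_\lambda,u_0)=\delta_{k,0}\ZZ,\qquad C_{d_l-1}(\varphi_\lambda,y_0)\neq0,\qquad d_l=\dim\overline H_l\geq2,
$$
valid for $\lambda\in(0,\hat\lambda)$ with $\hat\lambda$ as in Theorem \ref{th12}. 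The added regularity $f(z,\cdot)\in C^1(\RR)$ makes $\varphi_\lambda$ of class $C^2$ away from $0$ (away from the singular behaviour forced by the concave term $\frac{\lambda}{q}\|u\|_q^q$), which is precisely what is needed to bring in the Morse-theoretic machinery that produces a fifth solution. The strategy is therefore: compute the critical groups of $\varphi_\lambda$ at $0$ and at $\infty$, observe that the four solutions already found cannot account for all the homology, and invoke the Morse relation to force a fifth critical point $\hat y$.

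First I would compute $C_k(\varphi_\lambda,0)$. Since $u=0$ is a local minimizer of $\varphi_\lambda$ by Proposition \ref{prop9}, we have $C_k(\varphi_\lambda,0)=\delta_{k,0}\ZZ$ for all $k\in\NN_0$; no spectral computation is needed here because the concave term dominates near the origin. Next I would compute the critical groups of $\varphi_\lambda$ at infinity, $C_k(\varphi_\lambda,\infty)=H_k(H^1(\Omega),\varphi_\lambda^{-a})$ for $a>0$ large. Here the asymmetric structure of $H(f)'(ii)$--$(iii)$ enters: in the negative direction $f(z,\cdot)$ is (possibly resonant) below $\hat\lambda_1$ and, by the estimate \eqref{eq25}, $\varphi_\lambda$ behaves coercively along negative directions, while in the positive direction the quotient $f(z,x)/x$ is squeezed between $\eta$ and $\hat\eta$ with $\hat\lambda_1\le\eta\le\hat\eta\le\hat\lambda_m$, so along $+\hat u_1$ the functional is unbounded below (this is exactly the content of the sublevel estimate \eqref{eq62} on $\overline H_l$, combined with \eqref{eq63}--\eqref{eq64}). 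Following the standard computation for crossing nonlinearities (as in Perera \cite{12} and the cited works on asymmetric problems, adapted to the Robin/indefinite setting via Proposition \ref{prop2}), this yields $C_k(\varphi_\lambda,\infty)=\delta_{k,d_m-1}\ZZ$ (or a shifted version recording the dimension of the negative cone, with $d_m=\dim\overline H_m$), and in particular $C_{d_l-1}(\varphi_\lambda,\infty)$ is either $0$ or $\ZZ$ depending on whether $d_l-1$ equals the relevant Morse-type index.

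Then I would write the Morse relation
$$
\sum_{u\in K_{\varphi_\lambda}}\sum_{k\ge0}(-1)^k \operatorname{rank} C_k(\varphi_\lambda,u)\;=\;\sum_{k\ge0}(-1)^k \operatorname{rank} C_k(\varphi_\lambda,\infty),
$$
assuming $K_{\varphi_\lambda}$ finite (otherwise there are infinitely many solutions and we are done). The contributions of the four known critical points are: $C_k(\varphi_\lambda,0)=\delta_{k,0}\ZZ$; $C_k(\varphi_\lambda,u_0)=\delta_{k,0}\ZZ$ since $u_0$ is a local minimizer; $C_k(\varphi_\lambda,v_0)$ and $C_k(\varphi_\lambda,\hat u)$ are $\delta_{k,1}\ZZ$ because $v_0$ and $\hat u$ are mountain-pass critical points of the $C^2$-away-from-zero functionals $\hat\varphi^{\pm}_\lambda$ and, using that $\varphi_\lambda$ agrees with $\hat\varphi^{\pm}_\lambda$ on $\pm C_+$ and a homotopy-invariance / retraction argument on a neighbourhood, one transfers this to $\varphi_\lambda$ (this is the standard ``mountain pass points have nontrivial $C_1$'' computation, valid here since $f(z,\cdot)\in C^1$); and $C_k(\varphi_\lambda,y_0)$ has $C_{d_l-1}\neq0$ with $d_l-1\ge1$. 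Summing the alternating ranks of the four known points gives a value that, compared with $\sum_k(-1)^k\operatorname{rank}C_k(\varphi_\lambda,\infty)$, leaves a nonzero discrepancy; hence there must exist a fifth critical point $\hat y\in K_{\varphi_\lambda}$, distinct from $u_0,\hat u,v_0,y_0,0$, and $\hat y\in C^1(\overline\Omega)\backslash\{0\}$ by the regularity theory of Wang \cite{15}.

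The main obstacle is the two critical-groups-at-infinity-type computations: one must carefully handle the interplay of the concave perturbation $\frac{\lambda}{q}\|u\|_q^q$ (which is not $C^2$ at $0$ but is harmless at infinity since $q<2$), the possible resonance at $\hat\lambda_1$ from the left in the negative direction (requiring the sharpened estimate $\hat\lambda_1 y^2-2F(z,y)\to+\infty$ from \eqref{eq25} to get the correct homotopy type of sublevel sets), and the indefinite unbounded potential (handled through the orthogonal decomposition $H^1(\Omega)=\overline H_m\oplus\hat H_m$ and Proposition \ref{prop2}). A secondary technical point is justifying $C_k(\varphi_\lambda,v_0)=C_k(\varphi_\lambda,\hat u)=\delta_{k,1}\ZZ$: since $v_0\in D_+\subseteq\operatorname{int}C_+$, on a $C^1$-neighbourhood of $v_0$ the functionals $\varphi_\lambda$ and $\hat\varphi^+_\lambda$ coincide, so the critical groups agree, and the mountain-pass characterization of $v_0$ for $\hat\varphi^+_\lambda$ (Proposition \ref{prop10}, \eqref{eq59}--\eqref{eq61}) together with the $C^2$-regularity near $v_0$ forces $\operatorname{rank}C_1=1$ and $C_k=0$ for $k\neq1$; the same argument applies to $\hat u$ using the mountain-pass structure \eqref{eq57}--\eqref{eq58} for $\hat\varphi^-_\lambda$.
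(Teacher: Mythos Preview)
Your approach diverges from the paper's at the decisive step of producing the fifth solution $\hat y$. The paper does \emph{not} compute $C_k(\varphi_\lambda,\infty)$ and does \emph{not} invoke the global Morse relation. Instead it observes that Perera's Theorem~3.1 in \cite{12}, which was already used in Theorem~\ref{th12} to produce $y_0$ with $C_{d_l-1}(\varphi_\lambda,y_0)\neq0$, in fact delivers \emph{two} critical points simultaneously: besides $y_0$ there is a second point $\hat y\neq y_0$ with $C_{d_l}(\varphi_\lambda,\hat y)\neq0$. The remaining work is then purely local: one checks $\hat y\notin\{0,u_0,\hat u,v_0\}$ by comparing critical-group indices, since $C_k(\varphi_\lambda,0)=C_k(\varphi_\lambda,u_0)=\delta_{k,0}\ZZ$ and $C_k(\varphi_\lambda,\hat u)=C_k(\varphi_\lambda,v_0)=\delta_{k,1}\ZZ$, while $d_l\geq2$.

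Your Morse-relation route is not unreasonable in spirit, but as written it has genuine gaps. First, you never actually compute $C_k(\varphi_\lambda,\infty)$: writing ``$\delta_{k,d_m-1}\ZZ$ (or a shifted version\ldots)'' is not a result, and for an asymmetric reaction with possible resonance from the left at $\hat\lambda_1$ this computation is delicate---indeed, Perera's linking theorem is designed precisely to bypass it. Second, even granting $C_k(\varphi_\lambda,\infty)$, the Morse identity needs the \emph{full} critical groups of $y_0$, and you only know one nonzero group $C_{d_l-1}(\varphi_\lambda,y_0)\neq0$; the alternating sum cannot be evaluated from that alone. Third, your transfer $C_k(\hat\varphi^+_\lambda,v_0)=C_k(\varphi_\lambda,v_0)$ rests on the claim $v_0\in D_+$, but the paper explicitly notes (Remark after Proposition~\ref{prop11}) that the negative concave term obstructs the Hopf boundary-point theorem, so one only has $v_0(z)>0$ on $\Omega$, not on $\overline\Omega$; the paper instead transfers the critical groups via the $C^1$-continuity property (Gasinski--Papageorgiou \cite{3}, Theorem~5.126), which does not require $v_0\in D_+$.
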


\begin{proof}
	Now we have $\varphi_\lambda\in C^2(H^1(\Omega)\backslash\{0\},\RR)$. Similarly, $\hat\varphi^\pm_\lambda\in C^2(H^1(\Omega)\backslash\{0\},\RR)$.
	
	The solutions $u_0,\hat{u}, v_0, y_0$ are a consequence of Theorem \ref{th12}. From Proposition \ref{prop9} and (\ref{eq67}), we have
	\begin{equation}\label{eq68}
		C_k(\varphi_\lambda,u_0) = C_k(\varphi_\lambda,0)=\delta_{k,0}\ZZ\ \mbox{for all}\ k\in\NN_0,\ \lambda\in(0,\hat\lambda).
	\end{equation}
	
	Also, from the proof of Theorem \ref{th12}, we know that
	$$
	\begin{array}{ll}
		\hat{u}\ \mbox{is a critical point of}\ \hat\varphi^-_\lambda\ \mbox{of mountain pass type}, \\
		v_0\ \mbox{is a critical point of}\ \hat\varphi^+_\lambda\ \mbox{of mountain pass type}.
	\end{array}
	$$
	
	Invoking Corollary 6.102 of Motreanu, Motreanu \& Papageorgiou \cite{5}, we have
	\begin{equation}\label{eq69}
		C_k(\varphi^-_\lambda,\hat{u}) = C_k(\hat\varphi^+_\lambda,v_0) = \delta_{k,1}\ZZ\ \mbox{for all}\ k\in\NN_0.
	\end{equation}
	
	The continuity in the $C^1$-norm of the critical groups (see Theorem 5.126 in Gasinski \& Papageorgiou \cite[p. 836]{3}), implies that
	\begin{eqnarray}
		C_k(\hat{\varphi}^-_\lambda,\hat{u}) = C_k(\varphi_\lambda,\hat{u})\ \mbox{for all}\ k\in\NN_0, \label{eq70} \\
		C_k(\hat{\varphi}^+_\lambda, v_0) = C_k(\varphi_\lambda,v_0)\ \mbox{for all}\ k\in\NN_0. \label{eq71}
	\end{eqnarray}
	
	From (\ref{eq69}), (\ref{eq70}), (\ref{eq71}) it follows that
	\begin{equation}\label{eq72}
		C_k(\varphi_\lambda, \hat{u}) = C_k(\varphi_\lambda,v_0)=\delta_{k,1}\ZZ\ \mbox{for all}\ k\in\NN_0.
	\end{equation}
	
	The fourth nontrivial solution $y_0\in C^1(\overline\Omega)$ was produced by using Theorem 3.1 of Perera \cite{12}. According to that theorem, we can also find another function $\hat{y}\in H^1(\Omega)$, $\hat{y}\neq y_0$ such that
	\begin{equation}\label{eq73}
		\hat{y}\in K_{\varphi_\lambda}\subseteq C^1(\overline\Omega)\ \mbox{and}\ C_{d_l}(\varphi_\lambda, \hat{y})\neq0\ (d_l\geq2).
	\end{equation}
	
	From (\ref{eq68}), (\ref{eq72}), (\ref{eq73}) we conclude that
	$$
	\hat{y}\in C^1(\overline\Omega)\backslash\{u_0,\hat{u},v_0,y_0,0\}
	$$
	is the fifth nontrivial solution of problem (\ref{eqP}), for all $\lambda\in(0,\hat\lambda)$.
\end{proof}
\medskip
{\bf Acknowledgements.}  This research was supported by the Slovenian Research Agency grants
P1-0292, J1-8131, J1-7025, N1-0064, and N1-0083. V.D.~R\u adulescu acknowledges the support through a grant of the Romanian Ministry of Research and Innovation, CNCS--UEFISCDI, project number PN-III-P4-ID-PCE-2016-0130,
within PNCDI III.

\end{document}